
 \documentclass[11pt,twoside]{amsart}
\usepackage{amsmath,amsfonts,amscd,amsthm}
\usepackage{amssymb}
\usepackage{color}

\newtheorem{thm}{Theorem}[section]
\newtheorem{prop}[thm]{Proposition}

\newtheorem{cor}[thm]{Corollary}

\theoremstyle{definition}
\newtheorem{defin}[thm]{Definition}

\theoremstyle{remark}


\numberwithin{equation}{section}

\oddsidemargin = 0.5cm \evensidemargin= 0.5cm \textwidth=15cm
\topmargin=0.5cm \textheight=21.5cm

\markboth{C. Laurent-Thi\'ebaut and M.C. Shaw\hfil}{\hfil
Holomorphic  approximation and mixed boundary value problems}

\pagestyle{myheadings}
\providecommand\ufootnote[1]{{\let\thefootnote\relax\footnote[0]{#1}}}

\newcommand{\cc}{\mathcal C}

\newcommand{\oc}{\mathcal O}

\newcommand{\nb}{\mathbb N}

\newcommand{\ol}{\overline}

\newcommand{\pa}{\partial}
\newcommand{\opa}{\ol\partial}
\newcommand{\wt}{\widetilde}



\begin{document}

\title{Holomorphic approximation and mixed boundary value problems  for $\overline\partial$}


\thanks{The first author would like to thank the university of Notre Dame for its support during her stay in April 2019.  The second author was partially supported by  National Science Foundation grant  DMS-1700003}

\subjclass[2010]{32E30, 32W05}
\keywords{Runge's theorem, holomorphic approximation, Cauchy-Riemann operators}
\author{Christine Laurent-Thi\'ebaut}
\address{Universit\'e Grenoble-Alpes, Institut Fourier, CS 40700
38058 Grenoble cedex 9, France and CNRS UMR 5582, Institut Fourier,
Saint-Martin d'H\`eres, F-38402, France}
\email{christine.laurent@univ-grenoble-alpes.fr}
\author{Mei-Chi Shaw}
\address{Department of Mathematics,  University of Notre Dame, Notre Dame, IN 46656, USA. }
\email{mei-chi.shaw.1@nd.edu }

 \maketitle

 	\begin{abstract}  In this paper, we  study  holomorphic approximation  using   boundary value problems for $\opa$ on an annulus in the Hilbert space setting. The associated boundary conditions for $\opa$  are the mixed  boundary problems on an annulus. We     characterize  pseudoconvexity and    Runge type property of the domain
by 	 the vanishing of   related $L^2$ cohomology groups.
	
	\end{abstract}	\maketitle
 	\bigskip

\bibliographystyle{amsplain}


Holomorphic approximation theory plays an important role in function theory in one and several complex variables. In one complex variable, the classical Runge approximation theorem is related to   solving the
$\opa$ equation with compact support (see  e.g. Theorem 1.3.1 in H\"ormander's book \cite{Ho1}).  In several complex variables,  it is shown in \cite{LaShrunge}  that  holomorphic  approximation can also
be formulated in terms of Dolbeault cohomology groups. We refer the reader to the   recent paper \cite{FFW} for  a comprehensive and up-to-date  account of this
rich subject.

The purpose of this paper is to associate  holomorphic approximation   to a mixed boundary value problem for $\opa$ on an annulus in the $L^2$ setting.
Let $\Omega_1$ and $\Omega_2$ be two relatively compact domains in a complex hermitian manifold $X$ of complex dimension $n$ such that $\Omega_2\subset\subset\Omega_1$. Consider the annulus $\Omega=\Omega_1\setminus\ol\Omega_2$ between $\Omega_1$ and $\Omega_2$.  Let   $\opa:L^2_{p,q}(\Omega)\to L^2_{p,q+1}(\Omega)$  denote the maximal closure of  $\opa$  in  the weak sense (as defined by H\"ormander in \cite{Ho1}).  By this we mean that
$f\in \text{Dom}(\opa)$ if and only if $f\in L^2_{p,q}(\Omega)$ and $\opa f\in L^2_{p,q+1}(\Omega)$ in the weak sense. It is obvious that
$C^\infty_{p,q}(\ol \Omega)\subset \text{Dom}(\opa)$. If the boundary of $\Omega$ is Lipschitz, the space  $C^\infty_{p,q}(\ol \Omega)$ is dense
in the graph norm of $\opa$ by the Friedrichs  lemma  (see \cite{Ho1} or Lemma 4.3.2 in \cite{ChSh}).

Let  $\opa_c :L^2_{p,q}(\Omega)\to L^2_{p,q+1}(\Omega)$ be   the (strong) minimal closure  of the differential operator $\opa$ in the sense that
$f\in \text{Dom}(\opa_c)$ if and only if $f\in L^2_{p,q}(\Omega)$ and there exists a sequence of forms $f_\nu\in \mathcal D_{p,q}(\Omega)$ such that $f_\nu\to   f$ strongly in $L^2_{p,q}(\Omega)$ and $\opa f_\nu \to \opa f$ strongly in $L^2_{p,q+1}(\Omega)$.  
The two operators $\opa$ and $\opa_c$  are naturally dual to each other (see \cite{ChaSh}).
The  $\overline\partial$-Neumann problem on a domain  arises naturally and is of fundamental importance 
 in several complex variables (see \cite{Ho1,Ho2}, \cite{FoKo}  or \cite{ChSh}).

  The $\opa$-Neumann problem    on an annulus between two pseudoconvex domains in $\mathbb{C}^n$  has been studied  earlier    (see  \cite{Sh1}, \cite{Sh2},    \cite{Ho3}  and \cite{ChaSh}).
Recently, Li and   Shaw \cite{LiSh} introduced  the following mixed boundary problem for $\opa$ on the annulus $\Omega$.   It  was then extended by  Chakrabarti and Harrington in   \cite{ChaHa} where, in particular, they weaken  the regularity condition on the inner  boundary of the annulus from the earlier work in  \cite{Sh1} and \cite{LiSh}.
 In the  $L^2$ setting,  the  $\opa_{\rm mix}$ operator on the annulus is  the   closed realization  of $\opa$ which  satisfies the $\opa$-Neumann boundary condition on the outer boundary $b\Omega_1$ and the $\opa$-Cauchy condition  on the inner boundary $b\Omega_2$.
For $0\leq p, q\leq n$ and $u\in L^2_{p,q}(\Omega)$, $u\in{\rm Dom}(\opa_{\rm mix})$ if and only if there exists $v\in L^2_{p,q+1}(\Omega)$ and a sequence $(u_\nu)_{\nu\in\nb}\subset L^2_{p,q}(\Omega)$ which vanish near $\pa\Omega_2$ such that $u_\nu~\to~u$ in $L^2_{p,q}(\Omega)$ and $\opa u_\nu~\to~v$ in $L^2_{p,q+1}(\Omega)$. If $u\in{\rm Dom}(\opa_{\rm mix})$,  then we define  $\opa_{\rm mix} u=v$.  It is obvious that $\opa_{\rm mix}$
is a densely defined closed operator from one Hilbert space to another and
 $$\opa_c\subseteq \opa_{\rm mix}\subseteq\opa.$$

Let $D$ be a domain in $X$ and  $\oc(D)$ denote the space of holomorphic functions in $D$ and $W^1(D)$ be the Sobolev 1-space on $D$.
The following theorem  is  proved in  Theorems 2.2 and 2.4 in \cite{LiSh}.

\begin{thm}\label{mix}
Assume $X$ is Stein and  both $\Omega_1$ and $\Omega_2$ are pseudoconvex with $C^{1,1}$ boundary then, for any $2\leq q\leq n$ and $q=0$, $H^{0,q}_{\opa_{\rm mix}}(\Omega)=0$.  When $q=1$,   there exists a continuous bijection
\begin{equation}\label{eq:1 forms}\oc(\Omega_2)\cap W^1(\Omega_2)/\oc(\Omega_1)\cap L^2(\Omega_1)\to H^{0,1}_{\opa_{\rm mix}}(\Omega).\end{equation}
 
\end{thm}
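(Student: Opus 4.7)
The argument rests on the \emph{extension-by-zero identification}: a form $u \in L^2_{p,q}(\Omega)$ lies in $\Dom(\opa_{\rm mix})$ if and only if its extension by zero across $b\Omega_2$, which I will denote $Eu$, belongs to $\Dom(\opa)$ on $\Omega_1$, in which case $\opa(Eu) = E(\opa_{\rm mix} u)$. One direction is immediate from the definition of $\opa_{\rm mix}$ (the approximating sequence vanishes near $b\Omega_2$ and extends trivially to a sequence in $\Dom(\opa)$ on $\Omega_1$), while the converse follows from a Friedrichs-type regularization near $b\Omega_2$. The case $q = 0$ is then immediate: any $u \in \ke\opa_{\rm mix}$ extends to a holomorphic $L^2$ function on the connected domain $\Omega_1$ vanishing on $\Omega_2$, which must vanish identically by the identity principle.

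For $2 \leq q \leq n$, given a $\opa_{\rm mix}$-closed $u$, I would first solve $\opa v = Eu$ on $\Omega_1$ with $v \in L^2_{0,q-1}(\Omega_1)$ by H\"ormander's $L^2$ theory on the pseudoconvex $\Omega_1$. Since $Eu$ vanishes on $\Omega_2$, the restriction $v|_{\Omega_2}$ is $\opa$-closed, and pseudoconvexity of $\Omega_2$ together with $q-1 \geq 1$ yields $w \in L^2_{0,q-2}(\Omega_2)$ with $\opa w = v|_{\Omega_2}$. Extending $w$ to a form $W \in \Dom(\opa)$ on $\Omega_1$ with $W|_{\Omega_2} = w$ makes $v - \opa W$ vanish on $\Omega_2$, so that $(v - \opa W)|_\Omega$ extends by zero to a $\Dom(\opa)$-form on $\Omega_1$ and is therefore a $\Dom(\opa_{\rm mix})$-primitive of $u$. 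Producing the extension $W$ across the only-$C^{1,1}$ boundary $b\Omega_2$ is the delicate point; it can be handled by first regularizing $w$ in $\Omega_2$.

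For $q = 1$, I would define $\Phi([h]) = [\opa \wt h|_\Omega]$, where $\wt h \in W^1(\Omega_1)$ is any Sobolev extension of $h \in \oc(\Omega_2) \cap W^1(\Omega_2)$. Since $\opa \wt h = 0$ on $\Omega_2$ (because $\wt h|_{\Omega_2} = h$ is holomorphic), the extension-by-zero identification places $\opa \wt h|_\Omega$ in $\ke \opa_{\rm mix}$, and the same formalism yields both independence of $\wt h$ and descent to the quotient by $\oc(\Omega_1) \cap L^2$. For injectivity, if $\Phi([h]) = 0$, then $\opa \wt h|_\Omega = \opa_{\rm mix} \beta$ for some $\beta$; extending by zero gives $\wt h - E\beta \in \ke\opa \cap L^2$ on $\Omega_1$, that is, an element of $\oc(\Omega_1) \cap L^2$, whose restriction to $\Omega_2$ is $h$.

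Surjectivity is the main obstacle. Given a $\opa_{\rm mix}$-closed $u$, an arbitrary $L^2$-solution $v$ of $\opa v = Eu$ on $\Omega_1$ has $v|_{\Omega_2}$ holomorphic but only $L^2$, and Bergman functions need not lie in $W^1$. The remedy is to take the canonical solution $v = \opa^* N(Eu)$, with $N$ the $\opa$-Neumann operator on $\Omega_1$: interior ellipticity of $\Box = \opa\opa^* + \opa^*\opa$ combined with $Eu \in L^2$ forces $N(Eu) \in W^2_{\rm loc}(\Omega_1)$, hence $v \in W^1_{\rm loc}(\Omega_1)$, and therefore $v|_{\Omega_2} \in W^1(\Omega_2)$ since $\ol\Omega_2 \subset\subset \Omega_1$. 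Setting $h = v|_{\Omega_2}$, the form $\wt h - v$ vanishes on $\Omega_2$ and lies in $\Dom(\opa)$ on $\Omega_1$, so $(\wt h - v)|_\Omega \in \Dom(\opa_{\rm mix})$ with $\opa_{\rm mix}((\wt h - v)|_\Omega) = \opa \wt h|_\Omega - u$, proving $\Phi([h]) = [u]$. Continuity of $\Phi$ is automatic from the boundedness of Sobolev extension, $L^2$-solution of $\opa$, and restriction; continuity of its inverse then follows from the open mapping theorem.
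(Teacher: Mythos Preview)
Your approach is essentially the same as the paper's (the paper cites Theorem~\ref{mix} from \cite{LiSh} but gives a parallel proof for the analogous Theorem~\ref{Mix}): the extension-by-zero identification, solving $\opa$ on the larger domain, and correcting by a further primitive on the inner domain. Two points deserve comment.

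For $2\le q\le n$, you solve $\opa v = Eu$ only in $L^2$ and then flag the extension of $w$ across $b\Omega_2$ as ``delicate'', proposing to ``regularize $w$''. This is vague, and there is no obvious way to extend an $L^2$ form with $L^2$ $\opa$ across a $C^{1,1}$ hypersurface while staying in $\Dom(\opa)$. The paper avoids the issue by invoking interior regularity from the outset to obtain $v\in W^1_{\loc}(\Omega_1)$---exactly the device you use for $q=1$ via the canonical solution. Since $\ol\Omega_2\subset\subset\Omega_1$ this gives $v|_{\Omega_2}\in W^1(\Omega_2)$, and then the $W^1$ estimates of Harrington / Chakrabarti--Harrington on the $C^{1,1}$ pseudoconvex $\Omega_2$ yield $w\in W^1_{0,q-2}(\Omega_2)$, which admits a standard Sobolev extension $W\in W^1_{\loc}(\Omega_1)$ with $\opa W\in L^2$ automatically. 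You should simply apply your $q=1$ regularity argument uniformly for all $q\ge 1$.

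Your final sentence is wrong, though harmless for the theorem as stated. The cohomology $H^{0,1}_{\opa_{\rm mix}}(\Omega)$ is non-Hausdorff (the paper emphasizes this, following \cite{ChaHa}), and correspondingly $\oc(\Omega_1)\cap L^2(\Omega_1)$ is not closed in $\oc(\Omega_2)\cap W^1(\Omega_2)$; neither quotient is Fr\'echet and the open mapping theorem does not apply. The theorem claims only a continuous bijection, not a topological isomorphism, so you should drop the claim about continuity of the inverse.
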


 Moreover,  $H^{0,1}_{\opa_{\rm mix}}(\Omega)$ is infinite dimensional (see \cite{LiSh}). In fact, it is  even non-Hausdorff  (see   section 5 in \cite{ChaHa}).  The non-Hausdorff property of the quotient group  is equivalent to    that  the space
 $\oc(\Omega_1)\cap L^2(\Omega_1)$ is not a closed subspace in $\oc(\Omega_2)\cap W^1(\Omega_2)$
 under the $W^1(\Omega_2)$ norm
 (see  Proposition 4.5  in  \cite{Tre}).

 Instead of considering the non-Hausdorff cohomology group $H^{0,1}_{\opa_{\rm mix}}(\Omega)$, we consider the associated  Hausdorff cohomology  group  $^\sigma {( H^{p,0}_{W^1}(D)/H^{p,0}(X))}$ defined by
$$
^\sigma {( H^{p,0}_{W^1}(D)/H^{p,0}(X))}= H^{p,0}_{W^1}(D)/\ol{H^{p,0}(X)},$$
where  $\ol{H^{p,0}(X)}$  is the closure of the space $H^{p,0}(X)$ under the $W^1(D)$-norm.
 It follows from Proposition 1 in \cite{Ca}, that there exists a continuous surjective map

\begin{equation}\label{eq:1 forms sigma} \oc(\Omega_2)\cap W^1(\Omega_2)/\overline{\oc(\Omega_1)\cap L^2(\Omega_1)}\to ~^\sigma {H}^{0,1}_{\opa_{\rm mix}}(\Omega).\end{equation}

From \eqref{eq:1 forms sigma},
if  the space $\oc(\Omega_1)\cap L^2(\Omega_1)$ is dense in $\oc(\Omega_2)\cap W^1(\Omega_2)$ for the $W^1$ topology on $\Omega_2$ then  $^\sigma {H}^{0,1}_{\opa_{\rm mix}}(\Omega)=0$. 
 Thus  the associated Hausdorff cohomology group $^\sigma  {H}^{0,1}_{\opa_{\rm mix}}(\Omega)$ is directly related to holomorphic approximation. This simple observation motivates the present paper.  However,  the $L^2$  condition on the holomorphic functions near the boundary of $\Omega_1$ is of no interest in holomorphic approximation.  We avoid the  growth condition and reformulate another      $\opa$ problem with  mixed boundary  condition which is   more suitable for    holomorphic approximation.

We consider the more general situation: let   $D$ be a relatively compact domain  in a complex hermitian manifold $X$.
  For $0\leq p, q\leq n$, we define a new operator $\opa_{\rm Mix}$ on  ${(L^2_{loc})}^{p,q}(X\setminus\ol D)$, whose domain is the set of all $u\in {(L^2_{loc})}^{p,q}(X)$ such that $u$ is vanishing on $D$ and $\opa u\in {(L^2_{loc})}^{p,q+1}(X)$, where $\opa u$ is taken in the sense of currents. Then we set $\opa_{\rm Mix} f=\opa f$ in the sense of currents. Compared to the $\opa_{\rm mix}$ operator, we do not assume any growth condition at infinity
  of $X$.

The plan of the paper is as follows: In the  first section,
 we formulate a  new mixed boundary condition of $\opa$, denoted by  $\opa_{\rm Mix}$, which is  associated naturally with holomorphic approximation.
 We prove a theorem (see Theorem \ref{Mix}) analogous to Theorem \ref{mix}.

In the second section, we introduce the transposed operator $^t\opa_{\rm Mix}$ to $\opa_{\rm Mix}$ defined on ${(L^2_{loc})}^{n-p,n-q-1}(X\setminus\ol D)$, whose domain is the $u\in L^2_{n-p,n-q-1}(X\setminus\ol D)$ and $u$ is vanishing outside a compact subset of $X$ such that   $\opa u\in L^2_{n-p,n-q}(X\setminus\ol D)$, where $\opa u$ is taken in the sense of currents.
 We  prove the following characterization of approximation of $\opa$-closed forms using a version of  Serre duality.

\begin{thm}
Let $X$ be a Stein manifold of complex dimension $n\geq 2$, $D\subset\subset X$ a relatively compact pseudoconvex domain in $X$ with Lipschitz boundary. Let    $q$ be  a fixed integer such that $0\leq q\leq n-1$. Then, for any $0\leq p\leq n$,
the following assertions are equivalent.
\begin{enumerate}
\item The space of $W^1_{loc}$ $\opa$-closed $(p,q)$-forms on $X$ is dense in the space of $W^1$ $\opa$-closed $(p,q)$-forms on $D$ for the $W^1$ topology on $D$;

\item The natural map
$H^{n-p,n-q}_{\ol D,W^{-1}}(X)\to H^{n-p,n-q}_c(X)$ is injective;

\item ${H}^{n-p,n-q-1}_{^t\opa_{\rm Mix}}(X\setminus\ol D)=0$.
\end{enumerate}

\end{thm}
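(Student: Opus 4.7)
The strategy is to prove $(1)\Leftrightarrow(2)$ by a Hahn--Banach duality argument between the $W^1(D)$-topology on $\opa$-closed forms and $W^{-1}$-currents with support in $\ol D$, and $(2)\Leftrightarrow(3)$ from the long exact cohomology sequence associated to the short exact sequence of complexes of $W^{-1}$-currents on $X$ in which the subcomplex is ``support in $\ol D$'', the total complex is ``compactly supported in $X$'', and the quotient complex is the ${}^t\opa_{\rm Mix}$-complex on $X\setminus\ol D$. The latter identification uses the fact that a $W^{-1}$-current compactly supported in $X$ restricts to a current on $X\setminus\ol D$ vanishing outside a compact subset of $X$, which is precisely the defining condition of ${}^t\opa_{\rm Mix}$.

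For $(1)\Leftrightarrow(2)$, note that the space of $\opa$-closed $(p,q)$-forms is closed in $W^1(D)$, so approximation fails if and only if some nonzero continuous linear functional on this subspace annihilates every restriction of a global $W^1_{\loc}$ $\opa$-closed form. By Riesz--Fr\'echet, such a functional is represented, via the $L^2$-pairing, by a $W^{-1}$ $(n-p,n-q)$-current $T$ on $D$. The Lipschitz regularity of $bD$ allows extension by zero to a $W^{-1}$-current $\wt T$ on $X$ with support in $\ol D$, defining a class in $H^{n-p,n-q}_{\ol D,W^{-1}}(X)$. The annihilation condition translates, via Serre duality on the Stein manifold $X$, into the vanishing of $[\wt T]$ in $H^{n-p,n-q}_c(X)$; the construction is reversible, so non-density is equivalent to a nontrivial kernel in the natural map of (2).

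For $(2)\Leftrightarrow(3)$, the short exact sequence above produces the long exact sequence
\begin{equation*}
\cdots\to H^{n-p,n-q-1}_c(X)\to H^{n-p,n-q-1}_{{}^t\opa_{\rm Mix}}(X\setminus\ol D)\to H^{n-p,n-q}_{\ol D,W^{-1}}(X)\to H^{n-p,n-q}_c(X)\to\cdots.
\end{equation*}
Since $X$ is Stein and $q\geq 0$, Serre duality yields $H^{n-p,n-q-1}_c(X)\simeq H^{p,q+1}(X)^{*}=0$, whence the map $H^{n-p,n-q-1}_{{}^t\opa_{\rm Mix}}(X\setminus\ol D)\to H^{n-p,n-q}_{\ol D,W^{-1}}(X)$ is injective. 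By exactness its image is precisely the kernel of the natural map to $H^{n-p,n-q}_c(X)$, and that kernel vanishes if and only if $H^{n-p,n-q-1}_{{}^t\opa_{\rm Mix}}(X\setminus\ol D)=0$.

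The main obstacle lies in the careful justification of both ingredients in the $W^{\pm 1}$ setting with merely Lipschitz boundary. First, one needs a closed-range property for $\opa$ on $W^1(D)$ to ensure that the Hahn--Banach functional actually represents a cohomology class in $H^{n-p,n-q}_{\ol D,W^{-1}}(X)$; this rests on the $L^2$-solvability of $\opa_{\rm Mix}$ established in Section~1, the pseudoconvexity of $D$, and Friedrichs' lemma providing density of smooth forms in the graph norm of $\opa$ on Lipschitz domains. Second, one needs the extension by zero across $bD$ to be a continuous operation into $W^{-1}$-currents, which is again a consequence of the Lipschitz regularity of $bD$. Once these technical points are secured, both equivalences follow from the formal duality and long-exact-sequence machinery described above.
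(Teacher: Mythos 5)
Your argument for $(1)\Leftrightarrow(2)$ is essentially the paper's own proof of Theorem \ref{qW1Mergelyan}: Hahn--Banach together with the reflexivity $(W^{-1}_{\ol D}(X))'=W^1(D)$ for Lipschitz $bD$, and the Hausdorff property of $H^{n-p,n-q}_c(X)$ and of $H^{n-p,n-q}_{\ol D,W^{-1}}(X)$ to convert ``annihilates all $\opa$-closed forms'' into ``is $\opa$-exact with the appropriate support.'' That part is sound, modulo making explicit that the needed Hausdorffness comes from the $W^1$ estimates for $\opa$ on the pseudoconvex domain $D$ plus Serre duality (not from the solvability of $\opa_{\rm Mix}$ in Section 1, which concerns $X\setminus\ol D$ and a $\cc^{1,1}$ boundary).

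The gap is in $(2)\Leftrightarrow(3)$. Your short exact sequence of complexes does not exist as stated: the quotient of the complex of compactly supported $W^{-1}$-currents on $X$ by the subcomplex of those supported in $\ol D$ is a complex of $W^{-1}$-type objects on $X\setminus\ol D$, whereas $\mathrm{Dom}({}^t\opa_{\rm Mix})$ consists of forms $u\in L^2_{n-p,n-q-1}(X\setminus\ol D)$ with $\opa u\in L^2_{n-p,n-q}(X\setminus\ol D)$ vanishing outside a compact set. So ``restricts to a current vanishing outside a compact subset'' is not ``precisely the defining condition'' of ${}^t\opa_{\rm Mix}$; there is a two-derivative regularity mismatch, and identifying the cohomology of your quotient complex with $H^{\bullet}_{{}^t\opa_{\rm Mix}}(X\setminus\ol D)$ requires proving a quasi-isomorphism. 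That regularization is exactly the nontrivial content of the paper's Proposition \ref{qW1complement}, which it carries out by hand in the one relevant degree: (a) interior ellipticity of $\opa$ and the Dolbeault isomorphism on the Stein manifold $X$ are used to replace compactly supported $W^{-1}$ primitives by $L^2$ ones (this is where $H^{n-p,n-q-1}_c(X)=0$ enters), and (b) an $L^2$ Sobolev extension operator across the Lipschitz boundary $bD$ is used to pass from forms on $X\setminus\ol D$ vanishing outside a compact to compactly supported currents on $X$ whose $\opa$ has support in $\ol D$. With those two devices in place your long-exact-sequence bookkeeping (and the vanishing $H^{n-p,n-q-1}_c(X)=0$) gives the correct conclusion, but as written the central identification is asserted rather than proved, and neither of the two ``technical points'' you flag at the end addresses it.
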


 Finally, we obtain the following characterization of a pseudoconvex domain satisfying some Runge type property (see Corollary \ref{car}).

\begin{thm}\label{th:Runge Mix}
Let $X$ be a Stein manifold of complex dimension $n\geq 2$  and $D\subset\subset X$ a relatively compact domain in $X$ with $\cc^{1,1}$ boundary such that $X\setminus D$ is connected. Then
the following assertions are equivalent:

\begin{enumerate}
\item  the domain $D$ is pseudoconvex and the space $\oc(X)$ is dense in the space $\oc(D)\cap W^1(D)$  for the $W^1$ topology on $D$;

\item  $H^{n,r}_{\ol D, W^{-1}}(X)=0$, for $2\leq r\leq n-1$, and the natural map
$H^{n,n}_{\ol D,W^{-1}}(X)\to H^{n,n}_c(X)$ is injective;

\item ${H}^{n,q}_{^t\opa_{\rm Mix}}(X\setminus\ol D)=0$, for all $1\leq q\leq n-1$.
\end{enumerate}
\end{thm}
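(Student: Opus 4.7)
The plan is to apply the theorem stated immediately above (henceforth ``the preceding theorem'') with $p=0$ and its inner index varying over $0\le Q\le n-2$, and then to combine the resulting equivalences with Stein-ness of $X$ and a classical characterization of pseudoconvexity. With this choice of $p$, the index $n-Q-1$ appearing in assertion~(3) of the preceding theorem ranges over $1\le q\le n-1$, matching condition~(3) of Theorem~\ref{th:Runge Mix}; correspondingly the index $n-Q$ ranges over $2\le r\le n$, which matches condition~(2).

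For the equivalence $(2)\Leftrightarrow(3)$: for each admissible $Q$, the preceding theorem identifies the vanishing of $H^{n,n-Q-1}_{^t\opa_{\rm Mix}}(X\setminus\ol D)$ with the injectivity of $H^{n,n-Q}_{\ol D,W^{-1}}(X)\to H^{n,n-Q}_c(X)$. Since $X$ is Stein of dimension $n$, Cartan's Theorem~B together with Serre duality gives $H^{n,r}_c(X)=0$ for every $r\le n-1$, so in these degrees the injectivity reduces to the vanishing of the source (as demanded in~(2)); only in the top degree $r=n$ does bare injectivity survive. For $(1)\Rightarrow(3)$, the Runge density in~(1) is precisely the hypothesis of the preceding theorem for $p=Q=0$, yielding $H^{n,n-1}_{^t\opa_{\rm Mix}}(X\setminus\ol D)=0$. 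For the intermediate degrees $1\le Q\le n-2$, pseudoconvexity of $D$ (combined with $\cc^{1,1}$ boundary regularity) gives $H^{0,Q}_{W^1}(D)=0$, and Stein-ness of $X$ gives $H^{0,Q}(X)=0$; writing a $\opa$-closed $(0,Q)$-form as $\opa u$ on $D$, approximating $u$ by global $W^1_{\rm loc}$ potentials via the Runge hypothesis, and taking $\opa$ yields density of global $W^1_{\rm loc}$ $\opa$-closed $(0,Q)$-forms. A second application of the preceding theorem then gives the remaining vanishings in~(3).

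For $(3)\Rightarrow(1)$, the $Q=0$ case of the preceding theorem immediately recovers the Runge-type density. The delicate part is extracting pseudoconvexity of $D$ from the exterior vanishings $H^{n,q}_{^t\opa_{\rm Mix}}(X\setminus\ol D)=0$ for $1\le q\le n-1$. I would use the long exact sequence coupling the cohomology with support in $\ol D$ to the cohomology of $X\setminus\ol D$, together with Stein-ness of $X$ and the connectedness of $X\setminus D$, to translate these exterior vanishings into $H^{0,q}(D)=0$ for $1\le q\le n-1$, and then invoke the Andreotti--Grauert characterization of pseudoconvexity. This last step is the main obstacle: recovering an intrinsic property of $D$ from cohomology on its complement demands careful tracking of the interplay among the $W^{\pm1}$ regularities, the weak and strong realizations of $\opa$, and the $\cc^{1,1}$ boundary regularity. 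A secondary technical point is the ``approximation-of-potentials'' step in $(1)\Rightarrow(3)$, which needs the Runge hypothesis at a slightly higher regularity than literally stated and must be justified via Friedrichs-type smoothing.
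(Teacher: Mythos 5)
Your reduction of $(2)\Leftrightarrow(3)$ to a degree-by-degree statement (with $p=0$, using $H^{n,r}_c(X)=0$ for $r\le n-1$ on a Stein manifold to turn injectivity into vanishing of the source) is the right bookkeeping, and the degree-by-degree equivalence you want does hold --- it is Proposition \ref{qW1complement}, which the paper proves for an arbitrary Lipschitz domain. But as you have set it up there is a genuine circularity: the theorem you quote is stated under the hypothesis that $D$ is \emph{pseudoconvex}, and in the implications $(2)\Rightarrow(1)$ and $(3)\Rightarrow(1)$ pseudoconvexity is exactly what has to be produced. You must either observe that the $(2)\Leftrightarrow(3)$ part of that theorem needs no pseudoconvexity (which you cannot know from its statement alone), or first extract pseudoconvexity from (2) or (3) before invoking it; you do neither. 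The step you correctly identify as ``the main obstacle'' --- getting pseudoconvexity out of (3) --- is also left as a plan rather than a proof, and the plan points at the wrong tools: there is no Andreotti--Grauert \emph{converse}, and the paper instead chooses a strictly pseudoconvex $U\supset\supset D$, uses cut-off arguments to show that $H^{n,q}_{^t\opa_{\rm Mix}}(X\setminus\ol D)=0$ forces $H^{n,q}_{L^2}(X\setminus\ol D)=0$ for $1\le q\le n-2$ and Hausdorffness for $q=n-1$, then applies Theorems 4.8 and 5.1 of \cite{FuLaSh} together with Serre duality; the connectedness of $X\setminus D$ is used precisely to upgrade Hausdorffness of $H^{0,n-1}_{W^1}(D)$ to its vanishing. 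Similarly, before Serre duality or Theorem \ref{qW1Mergelyan} can be used one must know $H^{n,n}_{\ol D,W^{-1}}(X)$ is Hausdorff; the paper derives this from the injectivity hypothesis in (2), a point your proposal skips.

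The other concrete failure is your $(1)\Rightarrow(3)$ in intermediate degrees. Writing a $\opa$-closed $f\in W^1_{0,Q}(D)$ as $f=\opa u$ with $u\in W^1(D)$ and extending $u$ to $\wt u\in W^1_{loc}(X)$ only yields $\opa\wt u\in L^2_{loc}(X)$, one derivative short of the class $Z^{0,Q}_{W^1_{loc}}(X)$ you need to approximate in; no Runge hypothesis on holomorphic \emph{functions} and no Friedrichs smoothing repairs this loss of a derivative, since a $W^2$ solution of $\opa u=f$ on a $\cc^{1,1}$ pseudoconvex domain is not available. The paper avoids this entirely by routing $(1)\Rightarrow(2)\Rightarrow(3)$: pseudoconvexity gives $H^{0,q}_{W^1}(D)=0$, Serre duality gives $H^{n,r}_{\ol D,W^{-1}}(X)=0$ for $2\le r\le n-1$ plus Hausdorffness in top degree, Corollary \ref{merg} gives the injectivity, and only then does one descend to the $^t\opa_{\rm Mix}$ groups. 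In short, the skeleton of your argument is compatible with the paper's, but the two load-bearing steps (pseudoconvexity from the exterior vanishings, and the intermediate-degree approximation) are missing or would fail as described.
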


From (1) and (3) in Theorem \ref{th:Runge Mix}, we see that   the vanishing of the cohomology groups ${H}^{n,q}_{^t\opa_{\rm Mix}}(X\setminus\ol D)$  for all $1\leq q\leq n-1$ characterizes pseudoconvexity  and a Runge  type property of $D$.
 This is in contrast to earlier results using cohomology groups on $X\setminus \ol D$ to characterize holomorphic convexity (see  Trapani \cite{Tra}).  It is proved in
  \cite{Tra} that    the vanishing of  the Dolbeault cohomology groups ${H}^{n,q}(X\setminus\ol D)$ for $1\le q\le n-2$ and the Hausdorff property for $q=n-1$ characterizes the holomorphic convexity of $\ol D$. More recently,
  it is proved in    Fu-Laurent-Shaw \cite{FuLaSh} that     the vanishing of  the $L^2$ Dolbeault cohomology groups ${H}^{n,q}_{L^2}(X\setminus\ol D)$ for $1\le q\le n-2$ and the Hausdorff property for $q=n-1$ characterizes   pseudoconvexity of $D$ (see \cite{FuLaSh}).  Thus different cohomology groups  characterize different  holomorphic properties of the domain $D$.   Our results show that   $\opa_{\rm Mix}$   and its transpose $^t\opa_{\rm Mix}$ are naturally associated with holomorphic approximation.

\section{$W^1$-Mergelyan domains and $L^2$ theory for   $\opa$ with   mixed boundary conditions}

Let $X$ be a complex hermitian manifold of complex dimension $n$, where $n\ge 2$.

\begin{defin}
 A relatively compact domain $D$ with Lipschitz boundary in a complex manifold $X$ is called $W^1$-\emph{Mergelyan in $X$} if and only if $\oc(X)$ of holomorphic fuctions in $X$ is dense in the space $\oc_{W^1}(D)$ of  $W^1$ holomorphic functions in $D$ for the $W^1$ topology on $D$. 
\end{defin}

We would like to characterize domains which are $W^1$-Mergelyan in $X$ by means of some adapted mixed boundary value problem for the $\opa$-operator.
Let $L^2_{loc}(X)$ be the space of  $L^2_{loc}$ functions in $X$ endowed with the Fr\'echet topology of $L^2$ convergence on compact subsets, and  $L^2_c(X)$ the space of $L^2$ functions with compact support in $X$ with the inductive limit topology.  These two spaces are dual of each other (see \cite{Se} or \cite{LaLe}).  We use $(L^2_c)^{p,q}(X)$ to denote the space of $(p,q)$-forms with $L^2_{c}(X)$ coefficients.
For $0\leq p, q\leq n$,  we define  the densely defined operator $\opa_K$ from $(L^2_c)^{p,q}(X)$ into $(L^2_c)^{p,q+1}(X)$, whose domain is the space of all $f\in (L^2_c)^{p,q}(X)$ with $\opa f\in (L^2_c)^{p,q+1}(X)$, such that for any $f\in {\rm Dom}(\opa_K)$, $\opa_K f=\opa f$ in the sense of currents. We denote by $\opa_{loc}$ the densely defined transposed operator of $\opa_K$, then $\opa_{loc}$ maps  $(L^2_{loc})^{n-p,n-q-1}(X)$ into $(L^2_{loc})^{n-p,n-q}(X)$ and the domain of $\opa_{loc}$ is the space of all $f\in (L^2_{loc})^{n-p,n-q-1}(X)$ such that $\opa f\in (L^2_{loc})^{n-p,n-q}(X)$.

Let $D$ be a relatively compact domain with Lipschitz boundary in a complex manifold $X$. We are interested in the study in the $L^2$ setting of some operators $\opa_{\rm Mix}$ on $X\setminus\ol D$ such that $\opa_K\subseteq \opa_{\rm Mix}\subseteq\opa_{loc}$, where $\opa_K$ and $\opa_{loc}$ are the previously defined  operators. The domain of $\opa_{\rm Mix}$ is defined as follows:

For $0\leq p, q\leq n$ and $u\in {(L^2_{loc})}^{p,q}(X\setminus\ol D)$, $u\in{\rm Dom}(\opa_{\rm Mix})$ if and only if $u\in {(L^2_{loc})}^{p,q}(X)$,  $u$ is vanishing on $D$ and   $\opa u\in {(L^2_{loc})}^{p,q+1}(X)$, where $\opa u$ is taken in the sense of currents. Then we set $\opa_{\rm Mix} f=\opa f$ in the sense of currents. The transposed operator $^t\opa_{\rm Mix}$ is then an operator  whose domain is given by the set of all $u\in {(L^2_{loc})}^{n-p,n-q-1}(X\setminus\ol D)$, $u\in L^2_{n-p,n-q-1}(X\setminus\ol D)$, $\opa u\in L^2_{n-p,n-q}(X\setminus\ol D)$, where $\opa u$ is taken in the sense of currents, and $u$ is vanishing outside a compact subset of $X$.

For any $0\leq p\leq n$, we get two new differential complexes $({(L^2_{loc})}^{p,\bullet}(X\setminus\ol D),\opa_{\rm Mix})$ and $({(L^2_{loc})}^{n-p,\bullet}(X\setminus\ol D),^t\opa_{\rm Mix})$, which are dual complexes since the boundary of $D$ is Lipschitz (see \cite{LS}).  We denote by $H^{p,q}_{\opa_{\rm Mix}}(X\setminus\ol D)$ and $H^{p,q}_{^t\opa_{\rm Mix}}(X\setminus\ol D)$, $0\leq q\leq n$, the cohomology groups of the complexes $(L^2_{p,\bullet}(X\setminus\ol D),\opa_{\rm Mix})$ and $(L^2_{n-p,\bullet}(X\setminus\ol D),^t\opa_{\rm Mix})$ respectively. We endow the cohomology groups with quotient topology. Then it follows from Serre duality \cite{Se}  that $H^{p,q}_{\opa_{\rm Mix}}(X\setminus\ol D)$ is Hausdorff if and only if $H^{n-p,n-q+1}_{^t\opa_{\rm Mix}}(X\setminus\ol D)$ is Hausdorff. Moreover,  if $H^{p,q}_{^t\opa_{\rm Mix}}(X\setminus\ol D)$
is Hausdorff,  then $H^{p,q}_{^t\opa_{\rm Mix}}(X\setminus\ol D)$ is the dual space of $^\sigma {H}^{n-p,n-q}_{\opa_{\rm Mix}}(X\setminus\ol D)$ the   Hausdorff group associated to $H^{n-p,n-q}_{\opa_{\rm Mix}}(X\setminus\ol D)$.

\begin{thm}\label{Mix}
Let  $X$ be  a Stein manifold of complex dimension $n\geq 2$ with a hermitian metric  and $D$ a relatively compact pseudoconvex domain with $C^{1,1}$ boundary in $X$. Then, for any $0\leq p\leq n$,  we have
\begin{enumerate}
\item  $H^{p,q}_{\opa_{\rm Mix}}(X\setminus\ol D)=0$, if $2\leq q\leq n$ or $q=0$.

 \item There exists a linear continuous bijection
\begin{equation}\label{eq:Iso}l~:~H^{p,0}_{W^1}(D)/H^{p,0}(X)~\to~H^{p,1}_{\opa_{\rm Mix}}(X\setminus\ol D) .\end{equation}
\end{enumerate}
\end{thm}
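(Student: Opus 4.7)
The plan is to adapt the strategy of Theorem \ref{mix} to the new operator $\opa_{\rm Mix}$, treating $X$ as the outer Stein domain in place of the pseudoconvex $\Omega_1$, with the condition ``vanishing on $D$'' playing the role of the Cauchy-type condition on the inner boundary, and to realize the bijection $l$ via $W^1$-extension of holomorphic $(p,0)$-forms across $bD$.

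For part (1) with $q = 0$: a class is represented by $u \in (L^2_{loc})^{p,0}(X)$ with $u|_D = 0$ and $\opa u = 0$ in currents. By elliptic regularity for $\opa$ on $(p,0)$-forms, $u$ is a global holomorphic $(p,0)$-form; since $u$ vanishes on the open set $D$ and $X$ is connected, unique continuation forces $u \equiv 0$. For $2 \leq q \leq n$: a $\opa_{\rm Mix}$-closed $f$ extends by zero to a $\opa$-closed $\tilde f \in (L^2_{loc})^{p,q}(X)$. Stein solvability yields $v \in (L^2_{loc})^{p,q-1}(X)$ with $\opa v = \tilde f$, and then $v|_D \in L^2_{p,q-1}(D)$ is $\opa$-closed on $D$. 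Because $D$ is pseudoconvex with $C^{1,1}$ boundary and $q - 1 \geq 1$, the $L^2$ Dolbeault cohomology on $D$ vanishes, and one can in fact produce a solution $w \in W^1_{p,q-2}(D)$ of $\opa w = v|_D$ using the $\opa$-Neumann regularity available in the $C^{1,1}$ setting. A Seeley-type extension across the Lipschitz boundary $bD$ combined with a cut-off near $\ol D$ yields $\tilde w \in W^1(X)$ with $\tilde w|_D = w$; then $u := v - \opa \tilde w$ lies in $(L^2_{loc})^{p,q-1}(X)$, vanishes on $D$, and satisfies $\opa u = \tilde f$, giving $[f] = 0$.

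For part (2), I define $l(h) = [\opa \tilde h]$ for any $W^1_{loc}(X)$ extension $\tilde h$ of $h$ across $bD$ (such an extension exists by Seeley since $bD$ is Lipschitz). The form $\opa \tilde h$ lies in $(L^2_{loc})^{p,1}(X)$, vanishes on $D$ since $\opa h = 0$ there, and is $\opa$-closed, so it represents a class in $H^{p,1}_{\opa_{\rm Mix}}(X\setminus \ol D)$. Two extensions differ by an element of $\text{Dom}(\opa_{\rm Mix})$, so the class is independent of the choice; an $h$ coming from a global $H \in H^{p,0}(X)$ admits the extension $\tilde h = H$, so $l$ descends to the quotient. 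For surjectivity, given a $\opa_{\rm Mix}$-closed $g$, solve $\opa u = g$ on the Stein manifold $X$ with $u \in W^1_{loc}(X)$ (Stein solvability upgraded to $W^1$ by a standard local-solvability and patching argument); since $g = 0$ on $D$, the restriction $u|_D$ is a $W^1$ holomorphic $(p,0)$-form on $D$ and $l(u|_D) = [\opa u] = [g]$. For injectivity, if $\opa \tilde h = \opa u$ with $u \in \text{Dom}(\opa_{\rm Mix})$, then $\tilde h - u$ is $\opa$-closed on $X$, hence a global holomorphic $(p,0)$-form whose restriction to $D$ equals $h$, forcing $[h] = 0$ in the quotient. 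Continuity of $l$ is inherited from the continuity of the Seeley extension $W^1(D) \to W^1_{loc}(X)$ and of $\opa : W^1_{loc} \to L^2_{loc}$, with the open mapping theorem on the appropriate Fr\'echet quotients.

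The main obstacle is the $W^1$-regularity upgrade appearing in both parts: producing a solution $w \in W^1(D)$ of $\opa w = v|_D$ on the pseudoconvex $C^{1,1}$ domain $D$ (so that a Seeley extension across $bD$ still has $L^2$ $\opa$-derivative), and producing a solution $u \in W^1_{loc}(X)$ of $\opa u = g$ on the Stein manifold $X$ in the surjectivity step. Both rest on $\opa$-Neumann regularity for pseudoconvex $C^{1,1}$ domains (following Harrington and Chakrabarti--Harrington) combined with a standard patching of local $W^1$-solutions of $\opa$ on the Stein manifold $X$; these are non-trivial but classical ingredients.
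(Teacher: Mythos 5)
Your overall strategy is exactly the paper's: for $q\geq 2$, extend by zero, solve $\opa v=f$ on the Stein manifold $X$, correct $v$ by $\opa\wt w$ where $w$ solves $\opa w=v$ on $D$ and $\wt w$ is a Sobolev extension; for $q=1$, define $l(h)=[\opa\wt h]$ via a continuous extension operator and run the same well-definedness, kernel, and surjectivity arguments. The $q=0$ case and all of part (2) are fine as written.

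The one step that would fail as stated is in part (1): you take $v$ only in $(L^2_{loc})^{p,q-1}(X)$ and then claim a solution $w\in W^1_{p,q-2}(D)$ of $\opa w=v|_D$ ``using the $\opa$-Neumann regularity available in the $C^{1,1}$ setting.'' That is a gain of a full derivative at the boundary from merely $L^2$ data, which is not available on a general weakly pseudoconvex $C^{1,1}$ domain; the results of Harrington and Chakrabarti--Harrington invoked here give a $W^1$ solution from $W^1$ data (no loss), not from $L^2$ data. The $W^1$ regularity must be obtained \emph{before} restricting to $D$: since $f\in(L^2_{loc})^{p,q}(X)$ is $\opa$-closed on all of $X$ and $H^{p,q}(X)=0$, interior elliptic regularity (the Dolbeault isomorphism plus regularity of $\opa$ in the interior, with no boundary in the way) lets you choose $v\in(W^1_{loc})^{p,q-1}(X)$ with $\opa v=f$; then $v|_D\in W^1_{p,q-1}(D)$ and the cited $C^{1,1}$ results apply to give $w\in W^1_{p,q-2}(D)$, after which the extension and correction go through. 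You in fact perform precisely this interior $W^1$ upgrade on $X$ in the surjectivity step of part (2), and your closing paragraph gestures at it, so the fix is a reordering rather than a new idea --- but as written the $q\geq 2$ argument rests on a boundary regularity gain that does not hold.
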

\begin{proof}
The proof  is similar to  the proof of Theorems 2.2 and 2.4 in \cite{LiSh}.
If $q=0$, $H^{p,0}_{\opa_{\rm Mix}}(X\setminus\ol D)$ is   the space of holomorphic $(p,0)$-forms in $X$, which vanish identically on $D$. Since $X$ is Stein, hence connected, by analytic continuation we get $H^{p,0}_{\opa_{\rm Mix}}(X\setminus\ol D)=0$.

We now assume that $2\leq  q\leq n$.  Let $f\in \ker(\opa_{\rm Mix})\cap{\rm Dom}(\opa_{\rm Mix)}$. Then $f\in {(L^2_{loc})}^{p,q}(X)$, $f=0$ in $D$ and $\opa f=0$ in $X$. Since $X$ is Stein, $H^{p,q}(X)=0$ and by the Dolbeault isomorphism and the interior regularity of the $\opa$, we get $H_{L^2_{loc}}^{p,q}(X)=0$. More precisely there exists $v\in {(W^1_{loc})}^{p,q-1}(X)$ such that $\opa v=f$. Moreover we have $\opa v=0$ on $D$.

Since $q>1$ and $D$ is a relatively compact pseudoconvex domain with $\cc^{1,1}$ boundary, it follows from
\cite{Har} or Theorem 2.2 in \cite{ChaHa}  (see also \cite{Ko} for smooth boundary) that there exists $w\in W^1_{p,q-2}(D)$ such that $\opa w=v$ in $D$. Let $\wt w$ be a $W^1_{loc}$ extension of $w$ to $X$. We set $u=v-\opa\wt w$. Then $u$ is in ${(L^2_{loc})}^{p,q-1}(X)$, u vanishes on $D$ and satisfies $\opa u=f$. This proves (1).

We now consider the case when $q=1$. For any $f\in H^{p,0}_{W^1}(D)$, we extend $f$ as a $W^1_{loc}$ $(p,0)$-form $\wt f=E(f)$ on $X$, where $E$ is a continuous extension operator from $W^1_{p,0}(D)$ into ${(W^1_{loc})}^{p,0}(X)$. This is possible since the boundary of $D$  is $C^{1,1}$.  Then $\opa\wt f\in {(L^2_{loc})}^{p,1}(X)$ and
$\opa\wt f=0$ on $D$. Thus $\opa_{\rm Mix}(\opa\wt f)=0$ in $X\setminus\ol D$. We define a map
\begin{equation}\label{eq:l}l~:~H^{p,0}_{W^1}(D)~\to~H^{p,1}_{\opa_{\rm Mix}}(X\setminus\ol D)\end{equation}
by $l(f)=[\opa\wt f]$.

First, we show that $l$ is well-defined. If $\wt f_1$ is another $W^1_{loc}$ extension of $f$ to $X$, then
$$\opa\wt f-\opa\wt f_1=\opa(\wt f-\wt f_1).$$
Since $\wt f=\wt f_1=f$ on $D$, $\wt f-\wt f_1$ vanishes on $D$ and $\opa\wt f-\opa\wt f_1=\opa_{\rm Mix}(\wt f-\wt f_1)$, that is
$$[\opa\wt f]=[\opa\wt f_1]\quad {\rm in}\quad H^{p,1}_{\opa_{\rm Mix}}(X\setminus\ol D).$$
Thus the map $l$ is well-defined  and it is  continuous if $H^{p,1}_{\opa_{\rm Mix}}(X\setminus\ol D)$ is endowed with the quotient topology.

We will show that the kernel of the map $l$ is $H^{p,0}(X)$. Let $f\in H^{p,0}_{W^1}(D)$ such that $l(f)=[0]$. First we extend $f$ as a $W^1_{loc}$
$(p,0)$-form on $X$. Thus we have that $\opa\wt f$ is a $\opa_{\rm Mix}$-closed form and, since $l(f)=[0]$, it is $\opa_{\rm Mix}$-exact. Therefore there exists $g\in {(L^2_{loc})}^{p,0}(X)$ such that $g=0$ on $D$ and $\opa_{\rm Mix} g=\opa\wt f$. Let $F=\wt f-g$. Then $F$ is holomorphic in $X$ and $F=f$ on $D$. Thus $l(f)=0$ implies that $f$ can be extended as a holomorphic $(p,0)$-form in $X$.

Next we prove that $l$ is surjective. Let $f\in {(L^2_{loc})}^{p,1}(X)\cap\ker(\opa_{\rm Mix})$, then $f=0$ in $D$ and $\opa f=0$ in $X$. Since $X$ is a Stein manifold, using Dolbeault isomorphism and the interior regularity of the $\opa$ operator, there exists a $(p,0)$-form $u\in ({W^1_{loc}})^{p,0}(X)$ such that $\opa u=f$ in $X$. Moreover $u_{|_D}$ is a $W^1$ holomorphic $(p,0)$-form in $D$. Hence $l(u_{|_D})=[\opa u]=[f]$.

 Thus the map defined by \eqref{eq:l}  induces a map
$$l~:~H^{p,0}_{W^1}(D)/H^{p,0}(X)~\to~H^{p,1}_{\opa_{\rm Mix}}(X\setminus\ol D) $$
which is one-to-one continuous and onto, if we endow the quotient space $H^{p,0}_{W^1}(D)/H^{p,0}(X)$ with the quotient topology.

\end{proof}

Using the same arguments as in \cite{LiSh}, one can show that  $H^{0,1}_{\opa_{\rm Mix}}(X\setminus\ol D)$ is infinite dimensional. In fact,  one has the following results using arguments in \cite{ChaHa}.
 \begin{cor} The space    $H^{0,1}_{\opa_{\rm Mix}}(X\setminus\ol D)$ is    non-Hausdorff.

 \end{cor}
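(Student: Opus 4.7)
The plan is to apply Theorem \ref{Mix} with $p=0$, which provides a continuous linear bijection
$$l: \oc_{W^1}(D)/\oc(X) \to H^{0,1}_{\opa_{\rm Mix}}(X\setminus\ol D),$$
where $\oc(X)$ is identified with its image in $\oc_{W^1}(D)$ under restriction and the left-hand side carries the quotient topology inherited from the $W^1(D)$ norm. Since $l$ is a continuous injection with $l(0)=0$, the preimage of $\{0\}$ in the target equals $\{0\}$ in the quotient; so if the target were Hausdorff, this singleton would be closed in the quotient, equivalently $\oc(X)|_D$ would be closed in the Hilbert space $\oc_{W^1}(D)$. It therefore suffices to prove that $\oc(X)|_D$ is \emph{not} closed in $\oc_{W^1}(D)$.

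Following the line of argument in \cite{ChaHa}, I would examine the restriction map $R: \oc(X) \to \oc_{W^1}(D)$, $f \mapsto f|_D$. Connectedness of the Stein manifold $X$ together with the identity principle gives injectivity of $R$. Cauchy estimates on a compact neighbourhood of $\ol D$ in $X$ show that $R$ is continuous from the Fréchet topology of uniform convergence on compact subsets of $X$ on the source to the $W^1(D)$-norm topology on the target. If $\oc(X)|_D$ were closed in $\oc_{W^1}(D)$, it would inherit the structure of a Banach space, and $R$ would become a continuous linear bijection between two Fréchet spaces.

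The main obstacle is to turn this into a contradiction. I would invoke the Banach open mapping theorem (valid in the Fréchet setting), which forces $R$ to be a topological isomorphism. But this would render $\oc(X)$, equipped with the compact-open topology, normable, which is impossible: on a non-compact complex manifold the space of holomorphic functions with its compact-open topology admits no bounded neighbourhood of zero, since the defining seminorms $\sup_{K_\nu}|\cdot|$ for an exhaustion $K_\nu \nearrow X$ cannot be dominated by any single one of them (as witnessed, in local coordinates, by polynomials $f_k$ with $\sup_{K}|f_k| \to 0$ for a fixed compact $K$ but $\sup_{K'}|f_k| \to \infty$ on a larger compact $K' \subset X$). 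Since $X$ is a Stein manifold of complex dimension $n \geq 2$, it is non-compact, so this contradiction applies. It follows that $\oc(X)|_D$ is not closed in $\oc_{W^1}(D)$, hence the quotient on the left of $l$ is non-Hausdorff; by continuity and injectivity of $l$ the space $H^{0,1}_{\opa_{\rm Mix}}(X\setminus\ol D)$ is non-Hausdorff as well.
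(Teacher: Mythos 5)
Your proof is correct and follows the same overall strategy as the paper's: reduce the statement to showing that $\oc(X)|_D$ is not closed in $\oc_{W^1}(D)$, assume it is closed, and derive a contradiction from the open mapping theorem applied to the restriction map $R$. The two sub-steps are, however, carried out differently. For the reduction, the paper argues that if $H^{0,1}_{\opa_{\rm Mix}}(X\setminus\ol D)$ were Hausdorff then $l$ would be a topological isomorphism by the open mapping theorem, whereas you observe directly that the preimage of the closed set $\{0\}$ under the continuous injection $l$ is $\{0\}$, and a topological vector space in which $\{0\}$ is closed is Hausdorff; your version is cleaner and avoids a second appeal to the open mapping theorem on a quotient that is not a priori complete. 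For the contradiction, the paper notes that $R$ is a compact operator by Montel's theorem, so a closed image would be a Banach space with relatively compact unit ball, hence finite dimensional, contradicting Steinness of $X$; you instead conclude that $R$ would be a topological isomorphism, making $\oc(X)$ normable, and rule this out because the compact-open topology admits no bounded neighbourhood of zero. Both contradictions are valid. One caveat on your last step: the claim that $\oc(X)$ is non-normable on \emph{every} non-compact complex manifold is false in that generality (there exist non-compact manifolds with $\oc(X)=\cb$, for which the compact-open topology is a norm topology), and "polynomials in local coordinates" is not quite the right witness on an abstract manifold. What you actually need, and what holds here, is that $X$ is Stein: for any compact $K$ the hull $\hat K$ is compact, so one can pick $z\notin\hat K$ and $g\in\oc(X)$ with $\sup_K|g|<1<|g(z)|$, and the powers $g^k$ show that the seminorm ball $\{f:\sup_K|f|<1\}$ is unbounded. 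With that adjustment the argument is complete.
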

 \begin{proof}
 We first show that $H^{p,0}_{W^1}(D)/H^{p,0}(X)$ is non-Hausdorff.
 The non-Hausdorff property of the quotient  space $H^{p,0}_{W^1}(D)/H^{p,0}(X)$ is equivalent to  that   the space
 $H^{p,0}(X)$ is not a closed subspace in $H^{p,0}_{W^1}(D)$
 (see Proposition 4.5 in \cite{Tre}).  The proof of this is exactly the same as in   \cite{ChaHa} and we repeat the arguments
 for the benefit of the reader.

 Let $R:H^{p,0}(X)\to H^{p,0}_{W^1}(D)$ be the restriction map.
  From the Montel theorem,   $R$ is a compact operator.
 Suppose
 $R(H^{p,0}(X))$ is  a closed subspace in $H^{p,0}_{W^1}(D)$. It follows from  the open mapping theorem, the unit ball in  $R(H^{p,0}(X))$ is relatively compact and hence $R(H^{p,0}(X))$
 is finite dimensional. This is a contradiction since $X$ is Stein. Thus $R(H^{p,0}(X))$ is not closed and $H^{p,0}_{W^1}(D)/H^{p,0}(X)$ is non-Hausdorff.

If $H^{0,1}_{\opa_{\rm Mix}}(X\setminus\ol D)$ is Hausdorff, then from   \eqref{eq:Iso} and  the open mapping theorem,  the space   $H^{0,1}_{\opa_{\rm Mix}}(X\setminus\ol D)$
is topologically  isomorphic   to $H^{p,0}_{W^1}(D)/H^{p,0}(X)$, which is non-Hausdorff.  This is a contradiction.
We conclude    that  $H^{0,1}_{\opa_{\rm Mix}}(X\setminus\ol D)$ is also   non-Hausdorff. The corollary is proved.

 \end{proof}

\begin{defin}  We define  the associated Hausdorff   quotient

 \begin{equation}\label{eq:forms sigma} ^\sigma {( H^{p,0}_{W^1}(D)/H^{p,0}(X))}=H^{p,0}_{W^1}(D)/\ol{H^{p,0}(X)}\end{equation}
where  $\ol{H^{p,0}(X)}$  is the closure of the space $H^{p,0}(X)$ under the $W^1(D)$-norm.

\end{defin}

\begin{cor}
Assume $X$ is a Stein manifold of complex dimension $n\geq 2$ and $D$ a relatively compact pseudoconvex domain with $C^{1,1}$ boundary in $X$.

 Suppose that $D$ is $W^1$-Mergelyan. Then   $H^{n,n-1}_{^t\opa_{\rm Mix}}(X\setminus\ol D)=0$.
\end{cor}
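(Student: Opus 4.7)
The plan is to chain together Theorem \ref{Mix} with the Serre duality between $\opa_{\rm Mix}$ and $^t\opa_{\rm Mix}$ recalled just before Theorem \ref{Mix}, using $W^1$-Mergelyan to kill the relevant Hausdorff quotient on the $\opa_{\rm Mix}$ side.

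First I would specialize Theorem \ref{Mix}(2) to $p=0$ to obtain the continuous bijection
\begin{equation*}
l~:~\oc_{W^1}(D)/\oc(X)~\to~H^{0,1}_{\opa_{\rm Mix}}(X\setminus\ol D).
\end{equation*}
The hypothesis that $D$ is $W^1$-Mergelyan says exactly that $\oc(X)$ is dense in $\oc_{W^1}(D)$ for the $W^1(D)$ topology, hence the closure of $\{0\}$ in the quotient $\oc_{W^1}(D)/\oc(X)$ is the whole space, so $^\sigma(\oc_{W^1}(D)/\oc(X))=0$. Since $l$ is continuous and surjective, it descends to a surjection between the associated Hausdorff quotients, which gives $^\sigma H^{0,1}_{\opa_{\rm Mix}}(X\setminus\ol D)=0$.

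Next I would verify that $H^{n,n-1}_{^t\opa_{\rm Mix}}(X\setminus\ol D)$ is itself Hausdorff, so that the duality recalled in the excerpt can be invoked. By Theorem \ref{Mix}(1), applied with $p=0$ and $q=2$ (which is legal since $n\geq 2$), one has $H^{0,2}_{\opa_{\rm Mix}}(X\setminus\ol D)=0$. This is trivially Hausdorff, so by the Serre duality statement ``$H^{p,q}_{\opa_{\rm Mix}}$ is Hausdorff iff $H^{n-p,n-q+1}_{^t\opa_{\rm Mix}}$ is Hausdorff'' with $(p,q)=(0,2)$, the group $H^{n,n-1}_{^t\opa_{\rm Mix}}(X\setminus\ol D)$ is Hausdorff.

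Once Hausdorffness is in hand, the duality gives
\begin{equation*}
H^{n,n-1}_{^t\opa_{\rm Mix}}(X\setminus\ol D)~\cong~\bigl(~^\sigma H^{0,1}_{\opa_{\rm Mix}}(X\setminus\ol D)\bigr)',
\end{equation*}
and the right-hand side vanishes by the first step. Hence $H^{n,n-1}_{^t\opa_{\rm Mix}}(X\setminus\ol D)=0$. The only slightly delicate point is checking that continuity of $l$ really forces the induced map between Hausdorff quotients to be surjective, but this is immediate once we note that the closure of $\{0\}$ in $H^{0,1}_{\opa_{\rm Mix}}(X\setminus\ol D)$ is a closed subspace and the image under $l$ of the closure of $\{0\}$ in the source lies in it; everything else is bookkeeping with the duality pairing already quoted from Serre.
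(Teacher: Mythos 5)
Your proposal is correct and follows essentially the same route as the paper: pass from the bijection of Theorem \ref{Mix}(2) to a continuous surjection onto the associated Hausdorff quotient (killing it via the Mergelyan hypothesis), use $H^{0,2}_{\opa_{\rm Mix}}(X\setminus\ol D)=0$ together with Serre duality to see that $H^{n,n-1}_{^t\opa_{\rm Mix}}(X\setminus\ol D)$ is Hausdorff, and then identify it with the dual of $^\sigma H^{0,1}_{\opa_{\rm Mix}}(X\setminus\ol D)=0$. This matches the paper's argument step for step.
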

\begin{proof}
From (2) in Theorem \ref{Mix} and \eqref{eq:forms sigma},   there exists a map 
$$^\sigma l~:~  H^{p,0}_{W^1}(D)/\ol {H^{p,0}(X)}\to ~ ^\sigma {H}^{p,1}_{\opa_{\rm Mix}}(X\setminus\ol D).$$
which is continuous and onto (see Proposition 1 in \cite{Ca}). Therefore, if $\ol {H^{p,0}(X)}=H^{p,0}_{W^1}(D)$, then $^\sigma {H}^{p,1}_{\opa_{\rm Mix}}(X\setminus\ol D)=0$.

Thus if $D$ is $W^1$-Mergelyan in $X$, $^\sigma {H}^{0,1}_{\opa_{\rm Mix}}(X\setminus\ol D)=0$.
It follows from   Serre duality and from Theorem \ref{Mix} that $H^{n,n-1}_{^t\opa_{\rm Mix}}(X\setminus\ol D)$ is Hausdorff, since $H^{0,2}_{\opa_{\rm Mix}}(X\setminus\ol D)=0$. Using  again Serre duality,  we get $H^{n,n-1}_{^t\opa_{\rm Mix}}(X\setminus\ol D)=0$.
\end{proof}

\section{The $W^1$ $q$-Mergelyan density property}
Let $X$ be a complex hermitian manifold of complex dimension $n$, where $n\ge 1$.
In this section we extend the approximation results to arbitrary $(p,q)$-forms.
\begin{defin}
A relatively compact domain $D$ with Lipschitz boundary in $X$ is \emph{$W^1$ $(p,q)$-Mergelyan}, for $0\leq p\leq n$ and $0\leq q\leq n-1$, if and only if the space $Z^{p,q}_{W^1_{loc}}(X)$ of $W^1_{loc}$ $\opa$-closed $(p,q)$-forms in $X$ is dense in the space $Z^{p,q}_{W^1}(D)$ of  $W^1$ $\opa$-closed $(p,q)$-forms in $D$ for the $W^1$ topology on $D$.

For $p=q=0$, we will simply say that the domain  is \emph{$W^1$-Mergelyan} in $X$.
\end{defin}

If $D\subset\subset X$ is a relatively compact domain with Lipschitz boundary in $X$, we denote by $H^{r,s}_{\ol D, W^{-1}}(X)$ the  Dolbeault cohomology groups of $W^{-1}$ currents with prescribed support in $\ol D$ and by ${H}^{r,s}_{^t\opa_{\rm Mix}}(X\setminus\ol D)$ the Dolbeault cohomology groups of $L^2$ forms in $X\setminus\ol D$ vanishing outside a compact subset of $X$.
  We have that $W^s(D)$ is a reflexive Banach space, i.e. $(W^{-s}_{\overline D}(X))'=W^s(D)$.

\begin{thm}\label{qW1Mergelyan}
Let $X$ be a non compact complex manifold of complex dimension $n\geq 1$, $D\subset\subset X$ a relatively compact domain with Lipschitz boundary in $X$ and $p$ and $q$ be fixed integers such that $0\leq p\leq n$ and $0\leq q\leq n-1$. Assume  that $H^{n-p,n-q}_c(X)$ and  $H^{n-p,n-q}_{\ol D,W^{-1}}(X)$ are Hausdorff. Then $D$ is a $W^1$ $(p,q)$-Mergelyan domain in $X$ if and only if the natural map
$H^{n-p,n-q}_{\ol D,W^{-1}}(X)\to H^{n-p,n-q}_c(X)$
is injective.
\end{thm}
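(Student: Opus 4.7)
The plan is to establish the equivalence via a Serre-type duality and Hahn--Banach. The $W^1$ $(p,q)$-Mergelyan property says that the restriction map $R\colon Z^{p,q}_{W^1_{\loc}}(X)\to Z^{p,q}_{W^1}(D)$ has dense image; by Hahn--Banach this is equivalent to the injectivity of the transpose $R^{*}$, i.e.\ that every continuous linear functional on $Z^{p,q}_{W^1}(D)$ which vanishes on the image of $R$ is zero. Such a functional extends to a continuous linear functional on the ambient $W^1_{p,q}(D)$, and by the stated reflexivity $(W^{-1}_{\ol D}(X))'=W^1(D)$, is represented by a $W^{-1}$ current $T$ of bidegree $(n-p,n-q)$ supported in $\ol D$.

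Next I would translate the two vanishing conditions into cohomological ones. First, $T$ annihilates $Z^{p,q}_{W^1}(D)$ if and only if $T=\opa S$ for some $W^{-1}$ current $S$ of bidegree $(n-p,n-q-1)$ supported in $\ol D$, i.e.\ $T$ represents the zero class in $H^{n-p,n-q}_{\ol D,W^{-1}}(X)$. Second, $T$ annihilates the restriction $R(Z^{p,q}_{W^1_{\loc}}(X))$ if and only if $T$ represents the zero class in $H^{n-p,n-q}_c(X)$: indeed, $T$ has compact support (lying in $\ol D\subset\subset X$) and therefore defines a functional on $Z^{p,q}_{W^1_{\loc}}(X)$, whose vanishing is equivalent, by Serre duality for $\opa_K/\opa_{\loc}$ applied on $X$, to $T=\opa\sigma$ for some compactly supported current $\sigma$. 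One direction of each equivalence is immediate from Stokes; the other direction is the genuine content and requires the Hausdorff hypotheses on $H^{n-p,n-q}_c(X)$ and $H^{n-p,n-q}_{\ol D,W^{-1}}(X)$, so that the annihilator of the coboundary subspace equals the image of the transposed differential (closed range/open mapping).

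Once these two identifications are in place, the Mergelyan density is reformulated as follows: every $T\in W^{-1}_{n-p,n-q,\ol D}(X)$ whose class in $H^{n-p,n-q}_c(X)$ is zero must already have class zero in $H^{n-p,n-q}_{\ol D,W^{-1}}(X)$. Since the natural map $H^{n-p,n-q}_{\ol D,W^{-1}}(X)\to H^{n-p,n-q}_c(X)$ is induced precisely by forgetting the support condition $\ol D$ in favour of compact support in $X$, this statement is exactly the injectivity of that map, proving the equivalence in both directions.

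The main obstacle is the cohomological identification of the condition that $T$ annihilates all global $\opa$-closed $W^1_{\loc}$ $(p,q)$-forms with the exactness $T=\opa \sigma$ for some compactly supported $\sigma$. This is where the Hausdorff assumption on $H^{n-p,n-q}_c(X)$ is essential: it ensures that the image of $\opa$ acting on compactly supported currents of bidegree $(n-p,n-q-1)$ is closed in the natural DF-topology, so that the annihilator of $Z^{p,q}_{W^1_{\loc}}(X)$ inside the compactly supported $W^{-1}$ currents coincides with the $\opa$-image rather than only its closure. The analogous role is played by the Hausdorff hypothesis on $H^{n-p,n-q}_{\ol D,W^{-1}}(X)$ for the companion identification on $D$. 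Once these closed-range/open-mapping inputs are in hand, the argument is a formal application of Hahn--Banach and Serre duality.
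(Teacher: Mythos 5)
Your proposal is correct and follows essentially the same route as the paper: Hahn--Banach plus the identification $(W^{-1}_{\ol D}(X))'=W^1(D)$ to represent functionals by $W^{-1}$ currents supported in $\ol D$, then the two Hausdorff (closed-range) hypotheses to convert ``$T$ annihilates $Z^{p,q}_{W^1}(D)$'' and ``$T$ annihilates $Z^{p,q}_{W^1_{loc}}(X)$'' into $\opa$-exactness with support in $\ol D$ and with compact support, respectively. The only difference is organizational: the paper proves the forward implication by a direct limiting argument $\langle \opa S,\varphi_k\rangle=\pm\langle S,\opa\varphi_k\rangle=0$ along an approximating sequence, whereas you fold both directions into a single formal equivalence after establishing the two duality identifications.
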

\begin{proof}
Assume $D$ is $W^1$ $(p,q)$-Mergelyan in $X$ and let $T\in W^{-1}_{n-p,n-q}(X)$ with support contained in $\ol D$ such that $\opa T=0$.  We assume  that the cohomological class $[T]$ of $T$ vanishes in $H^{n-p,n-q}_c(X)$, which means that there exists $S\in W^{-1}_{n-p,n-q-1}(X)$ with compact support in $X$ such that $T=\opa S$. Since $H^{n-p,n-q}_{\ol D,W^{-1}}(X)$ is Hausdorff, then $[T]=0$ in $H^{n-p,n-q}_{\ol D,W^{-1}}(X)$ if and only if, for any form  $\varphi\in Z^{p,q}_{W^1}(D)$, we have $<T, \varphi>=0$. But, as $D$ is $W^1$ $(p,q)$-Mergelyan in $X$, there exists a sequence $(\varphi_k)_{k\in\nb}$ of $W^1_{loc}$ $\opa$-closed $(p,q)$-forms in $X$  which converge to $\varphi$ in $W^1(D)$. So
$$<T,\varphi>=\lim_{k\to\infty}<T,\varphi_k>=\lim_{k\to\infty}<\opa S,\varphi_k>=\pm \lim_{k\to\infty}<S,\opa\varphi_k>=0.$$

Conversely, by the Hahn-Banach theorem, it is sufficient to prove that, for any form $g\in Z^{p,q}_{W^1}(D)$ and any  $(n-p,n-q)$-current $T$ in $W^{-1}_{n-p,n-q}(X)$ with compact support in $\ol D$ such that $<T,f>=0$ for any form $f\in Z^{p,q}_{W^1_{loc}}(X)$, we have $<T,g>=0$. Since $H^{n-p,n-q}_c(X)$ is Hausdorff, the hypothesis on $T$ implies that there exists a $W^{-1}$ $(n-p,n-q-1)$-current $S$ with compact support in $X$ such that $T=\opa S$. The injectivity of the natural map
$H^{n-p,n-q}_{\ol D,W^{-1}}(X)\to H^{n-p,n-q}_c(X)$ implies that there exists a $W^{-1}$ $(n-p,n-q-1)$-current $U$ with compact support in $\ol D$ such that $T=\opa U$. Hence since the boundary of $D$ is Lipschitz, for any $g\in Z^{p,q}_{W^1}(D)$, we get
$$<T,g>=<\opa U, g>=\pm <U,\opa g>=0.$$
\end{proof}

\begin{prop}\label{qW1complement}
Let $X$ be a non compact complex manifold of complex dimension $n\geq 2$, $D\subset\subset X$ a relatively compact domain in $X$ with Lipschitz boundary and $p$ and $q$ fixed integers such that $0\leq p\leq n$ and $0\leq q\leq n-2$. Assume  that  $H^{n-p,n-q-1}_c(X)=0$. Then ${H}^{n-p,n-q-1}_{^t\opa_{\rm Mix}}(X\setminus\ol D)=0$ if and only if the natural map
$H^{n-p,n-q}_{\ol D,W^{-1}}(X)\to H^{n-p,n-q}_c(X)$
is injective.
\end{prop}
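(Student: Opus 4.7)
The plan is to bridge the two cohomologies through an auxiliary quotient complex together with a long exact sequence in $\opa$-cohomology. Set $Q^{\bullet} := W^{-1}_{c,\bullet}(X) / W^{-1}_{\ol D,\bullet}(X)$. The short exact sequence of complexes
$$0 \longrightarrow W^{-1}_{\ol D,\bullet}(X) \longrightarrow W^{-1}_{c,\bullet}(X) \longrightarrow Q^{\bullet} \longrightarrow 0$$
yields in cohomology an exact sequence
$$H^{n-p,n-q-1}_c(X) \longrightarrow H^{n-p,n-q-1}(Q^{\bullet}) \longrightarrow H^{n-p,n-q}_{\ol D,W^{-1}}(X) \longrightarrow H^{n-p,n-q}_c(X).$$
Since $H^{n-p,n-q-1}_c(X)=0$ by hypothesis, the natural map $H^{n-p,n-q}_{\ol D,W^{-1}}(X)\to H^{n-p,n-q}_c(X)$ is injective if and only if $H^{n-p,n-q-1}(Q^{\bullet})=0$. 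The proposition thus reduces to identifying $H^{n-p,n-q-1}(Q^{\bullet})$ with $H^{n-p,n-q-1}_{^t\opa_{\rm Mix}}(X\setminus\ol D)$.

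For this identification, I would build a natural map from the $^t\opa_{\rm Mix}$-complex into $Q^{\bullet}$ via extension by zero: a form $u\in L^2_{n-p,n-q-1}(X\setminus\ol D)$ that vanishes outside a compact subset of $X$ extends to $\tilde u\in L^2_c(X)\subset W^{-1}_c(X)$. If $\opa u=0$ in $L^2$ on $X\setminus\ol D$, then the current $\opa\tilde u$ on $X$ is supported in $\ol D$, since only a trace contribution on $bD$ can survive. In particular $[\opa\tilde u]=0$ in $Q^{\bullet}$, so $[\tilde u]$ is a $\opa$-closed class in $Q^{\bullet}$; similarly $^t\opa_{\rm Mix}$-coboundaries pass to $Q^{\bullet}$-coboundaries. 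To see that the induced map on cohomology is an isomorphism, I would argue in both directions: given a $W^{-1}$ cocycle $[U]\in H^{n-p,n-q-1}(Q^{\bullet})$, the restriction $U|_{X\setminus\ol D}$ is $\opa$-closed as a $W^{-1}$ current on $X\setminus\ol D$ with compact support in $X$, and via interior $\opa$-regularity on the complex manifold $X\setminus\ol D$ combined with a partition of unity, one modifies $U$ by a $Q^{\bullet}$-coboundary to an $L^2$ representative $\tilde u$ (surjectivity); and given an $L^2$ cocycle $u$ whose extension $\tilde u$ equals $\opa V \pmod{W^{-1}_{\ol D}}$ for some $V\in W^{-1}_c(X)$, so $u=\opa V|_{X\setminus\ol D}$, the same regularity promotes $V|_{X\setminus\ol D}$ to an $L^2$ primitive $v$ in the domain of $^t\opa_{\rm Mix}$ (injectivity).

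The main obstacle is precisely this regularization step. One must pass from $W^{-1}$ to $L^2$ representatives while respecting a nonstandard support condition: compact in $X$, but allowed to meet $bD$ from the exterior side. Interior $\opa$-regularity disposes of the problem away from $bD$, and the delicate task is to glue local solutions with cut-offs adapted to the Lipschitz boundary so as not to enlarge the support beyond a compact subset of $X$. Equivalently, one is asserting that the sheaves of $L^2$ and of $W^{-1}$ $(p,q)$-forms on $X\setminus\ol D$ both give fine Dolbeault resolutions, and that the associated cohomologies with compact-in-$X$ support conditions coincide.
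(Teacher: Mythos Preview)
Your long exact sequence reformulation is correct and pleasant: the short exact sequence of complexes does give the stated four-term exact sequence, so under the hypothesis $H^{n-p,n-q-1}_c(X)=0$ the injectivity of $H^{n-p,n-q}_{\ol D,W^{-1}}(X)\to H^{n-p,n-q}_c(X)$ is equivalent to $H^{n-p,n-q-1}(Q^\bullet)=0$. All the content then lies in the identification $H^{n-p,n-q-1}(Q^\bullet)\cong H^{n-p,n-q-1}_{^t\opa_{\rm Mix}}(X\setminus\ol D)$, and this is where your proposal diverges from the paper and runs into trouble.

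Your proposed regularization has a genuine gap. A fine-resolution argument on the open manifold $X\setminus\ol D$ with the support family $\Phi=\{S\subset X\setminus\ol D:\ \ol S^{\,X}\text{ compact}\}$ computes the $\Phi$-supported cohomology of the sheaf $L^2_{\loc}$ (or $W^{-1}_{\loc}$) on $X\setminus\ol D$. But the domain of $^t\opa_{\rm Mix}$ consists of forms that are globally $L^2$ on $X\setminus\ol D$, not merely $L^2_{\loc}$. Near $bD$ this is a real difference: an $L^2_{\loc}$ form on $X\setminus\ol D$ may blow up at the boundary, so $\Gamma_\Phi(L^2_{\loc})$ is strictly larger than the $^t\opa_{\rm Mix}$-complex, and the two cohomologies need not coincide a priori. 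Interior $\opa$-regularity on $X\setminus\ol D$ promotes $W^{-1}$ data only to $L^2_{\loc}$, and cut-offs adapted to the Lipschitz boundary do not restore the missing $L^2$ integrability there.

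The paper avoids this entirely by never regularizing on $X\setminus\ol D$. In each direction it extends across the Lipschitz boundary to produce a $\opa$-closed current with compact support on all of $X$, then invokes the Dolbeault isomorphism and interior regularity \emph{globally on $X$} (together with the hypothesis $H^{n-p,n-q-1}_c(X)=0$ in the sufficient direction) to obtain a primitive in $L^2_c(X)$, and finally restricts back to $X\setminus\ol D$. Since restriction of an $L^2_c(X)$ form to $X\setminus\ol D$ is automatically in $L^2(X\setminus\ol D)$ with compact-in-$X$ support, the boundary integrability issue never appears. Your long exact sequence framework is still viable, but the identification $H(Q^\bullet)\cong H_{^t\opa_{\rm Mix}}$ should be proved by this same extend--regularize-on-$X$--restrict maneuver, not by local regularity on the complement.
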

\begin{proof}
We first consider the necessary condition. Let $T\in W^{-1}_{n-p,n-q}(X)$ be a $\opa$-closed current with support contained in $\ol D$ such that the cohomological class $[T]$ of $T$ vanishes in $H^{n-p,n-q}_c(X)$. By the interior regularity property of the $\opa$-operator and the Dolbeault isomorphism, there exists $g\in L^2_{n-p,n-q-1}(X)$ and compactly supported such that $T=\opa g$. Since the support of $T$ is contained in $\ol D$, we have $\opa g=0$ on $X\setminus\ol D$. Therefore the vanishing of the group $H^{n-p,n-q-1}_{^t\opa_{\rm Mix}}(X\setminus\ol D)$ implies that there exists $u\in L^2_{n-p,n-q-2}(X\setminus\ol D)$ vanishing outside a compact subset of $X$ and such that $\opa u=g$ on $X\setminus\ol D$. Since the boundary of $D$ is Lipschitz there exists $\wt u$ a $L^2$ extension of $u$ to $X$, we set $S=g-\opa\wt u$, then $S\in W^{-1}(X)$ satisfies $T=\opa S$ and ${\rm supp}~S\subset\ol D$.

Conversely, let $g$ be a  $\opa$-closed $(n-p,n-q-1)$-form in $L^2_{n-p,n-q-1}(X\setminus\ol D)$ which vanishes outside a compact subset of $X$ and $\wt g$ an $L^2$ extension of $g$ to $X$, then $\wt g$ has compact support in $X$ and $T=\opa\wt g$ is a current in $W^{-1}_{n-p,n-q}(X)$ with support in $\ol D$. By the injectivity of the natural map
$H^{n-p,n-q}_{\ol D,W^{-1}}(X)\to H^{n-p,n-q}_c(X)$, there exists $S\in W^{-1}_{n-p,n-q-1}(X)$ with support contained in $\ol D$ and such that $\opa S=T$. We set $U=\wt g-S$. Then  $U$ is   a $W^{-1}$ $\opa$-closed $(n-p,n-q-1)$-current with compact support in $X$ such that $U_{|_{X\setminus\ol D}}=g$ in $X\setminus\ol D$. Since $H^{n-p,n-q-1}_c(X)=0$, by the interior regularity property of the $\opa$-operator and the Dolbeault isomorphism, we have $U=\opa w$ for some $w\in L^2_{n-p,n-q-2}(X)$ with compact support in $X$. Finally we get $g=U_{|_{X\setminus\ol D}}=\opa (w_{|_{X\setminus\ol D}})$.
\end{proof}

\begin{cor}\label{merg}
Let $X$ be a Stein hermitian  manifold of complex dimension $n\geq 2$ and $D\subset\subset X$ a relatively compact pseudoconvex domain with $\cc^{1,1}$ boundary in $X$. Then the following assertions are equivalent:

i) the domain $D$ is $W^1$-Mergelyan in $X$,

ii) the natural map
$H^{n,n}_{\ol D,W^{-1}}(X)\to H^{n,n}_c(X)$
is injective,

iii) ${H}^{n,n-1}_{^t\opa_{\rm Mix}}(X\setminus\ol D)=0$.
\end{cor}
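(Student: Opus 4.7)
My plan is to deduce Corollary \ref{merg} by directly specializing Theorem \ref{qW1Mergelyan} and Proposition \ref{qW1complement} to the parameters $(p,q) = (0,0)$. In that case the $W^1$ $(0,0)$-Mergelyan property reduces exactly to the $W^1$-Mergelyan property of statement (i), the natural map in (ii) appears verbatim, and the cohomology group in (iii) is literally the one produced by the proposition. So the corollary will follow once the hypotheses of the two results are checked in the present setting.

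The first step is to verify the cohomological hypotheses on $X$ itself. Since $X$ is Stein of complex dimension $n\geq 2$, Cartan's Theorem B gives $H^{0,q}(X)=0$ for all $q\geq 1$, and all the Dolbeault groups of $X$ are Hausdorff because $\opa$ has closed range on a Stein manifold. Serre duality on the non-compact manifold $X$ then yields $H^{n,n-1}_c(X)\cong (H^{0,1}(X))'=0$, which is precisely the hypothesis $H^{n-p,n-q-1}_c(X)=0$ required by Proposition \ref{qW1complement}. Similarly $H^{n,n}_c(X)$ is the strong dual of the Fréchet space $\oc(X)=H^{0,0}(X)$, hence Hausdorff, giving one of the two Hausdorff hypotheses of Theorem \ref{qW1Mergelyan}.

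The remaining and most delicate point is the Hausdorffness of $H^{n,n}_{\ol D,W^{-1}}(X)$. Since $(n,n+1)$-forms do not exist, this group is the cokernel of $\opa$ from $W^{-1}$ $(n,n-1)$-currents supported in $\ol D$ to $W^{-1}$ $(n,n)$-currents supported in $\ol D$, so Hausdorffness is exactly the closed range of this $\opa$. By the duality $(W^{-s}_{\ol D}(X))'=W^s(D)$ recalled just before Theorem \ref{qW1Mergelyan}, this closed range is equivalent to the closed range of the dual operator $\opa : W^1_{0,0}(D) \to W^1_{0,1}(D)$. The latter follows from the $W^1$ regularity of the $\opa$-Neumann problem on the relatively compact pseudoconvex domain $D$ with $C^{1,1}$ boundary, exactly as invoked in the proof of Theorem \ref{Mix} (via Harrington, Chakrabarti--Harrington, or Kohn's smooth-boundary results).

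With the hypotheses secured, Theorem \ref{qW1Mergelyan} at $(p,q)=(0,0)$ gives (i) $\Leftrightarrow$ (ii), and Proposition \ref{qW1complement} at $(p,q)=(0,0)$ gives (ii) $\Leftrightarrow$ (iii), which completes the proof. The main obstacle I anticipate is the Hausdorffness of $H^{n,n}_{\ol D,W^{-1}}(X)$, since the other vanishing and Hausdorff statements are immediate from Cartan's Theorem B and Serre duality; the $C^{1,1}$ regularity of $b D$ and the pseudoconvexity of $D$ are used precisely at this point, through the $W^1$-theory of $\opa$ on $D$.
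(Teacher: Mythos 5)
Your proposal is correct and follows essentially the same route as the paper: both reduce the corollary to Theorem \ref{qW1Mergelyan} and Proposition \ref{qW1complement} at $(p,q)=(0,0)$, check $H^{n,n-1}_c(X)=0$ and the Hausdorffness of $H^{n,n}_c(X)$ from the Steinness of $X$, and obtain the Hausdorffness of $H^{n,n}_{\ol D,W^{-1}}(X)$ from the vanishing of $H^{0,1}_{W^1}(D)$ (pseudoconvexity plus $\cc^{1,1}$ boundary) via duality. Your unpacking of ``Serre duality'' as the closed-range theorem for the transposed operator $\opa:W^1_{0,0}(D)\to W^1_{0,1}(D)$ is exactly the content the paper invokes.
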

\begin{proof}
Since $X$ is Stein, we have $H^{n,n-1}_c(X)=0$ and $H^{n,n}_c(X)$ is Hausdorff. The domain $D$ being relatively compact, pseudoconvex with $\cc^{1,1}$ boundary in $X$,  we have $H^{0,1}_{W^1}( D)=0$.  Then Serre duality implies that $H^{n,n}_{\ol D, W^{-1}}(X)$ is Hausdorff.
The corollary follows then from Theorem \ref{qW1Mergelyan} and Proposition \ref{qW1complement}.
\end{proof}

Finally using the characterization of pseudoconvexity by means of $W^1$ cohomology and Serre duality, we can prove the following corollary.

\begin{cor}\label{car}
Let $X$ be a Stein hermitian  manifold of complex dimension $n\geq 2$  and $D\subset\subset X$ a relatively compact domain in $X$ with $\cc^{1,1}$ boundary such that $X\setminus D$ is connected. Then
the following assertions are equivalent:

(i) the domain $D$ is pseudoconvex and $W^1$-Mergelyan in $X$;

(ii) $H^{n,r}_{\ol D, W^{-1}}(X)=0$, for $2\leq r\leq n-1$, and the natural map
$H^{n,n}_{\ol D,W^{-1}}(X)\to H^{n,n}_c(X)$ is injective;

(iii) ${H}^{n,q}_{^t\opa_{\rm Mix}}(X\setminus\ol D)=0$, for all $1\leq q\leq n-1$.
\end{cor}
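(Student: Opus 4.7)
The plan is to establish the implications via the cycle (i) $\Rightarrow$ (ii) $\Rightarrow$ (iii) $\Rightarrow$ (i), combining Corollary \ref{merg} and Proposition \ref{qW1complement} with a cohomological characterization of pseudoconvexity in terms of $W^1$-Dolbeault cohomology on $D$, transposed to $W^{-1}$-cohomology supported in $\ol D$ via Serre duality.

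First I would establish (ii) $\Leftrightarrow$ (iii) as a degree-by-degree application of Proposition \ref{qW1complement}. The Stein hypothesis on $X$ gives $H^{n,s}_c(X)=0$ for $0\leq s\leq n-1$ (by classical Serre duality, since the dual spaces $H^{0,n-s}(X)$ vanish), which both verifies the standing hypothesis of the Proposition and forces the natural map $H^{n,r}_{\ol D,W^{-1}}(X)\to H^{n,r}_c(X)$ to be injective if and only if $H^{n,r}_{\ol D,W^{-1}}(X)=0$ whenever $r<n$. Taking $p=0$ and letting $q$ run through $\{0,\dots,n-2\}$ in Proposition \ref{qW1complement}, the $n-1$ vanishing statements $H^{n,q}_{^t\opa_{\rm Mix}}(X\setminus\ol D)=0$ of (iii) correspond bijectively to the $n-2$ vanishings $H^{n,r}_{\ol D,W^{-1}}(X)=0$ for $2\leq r\leq n-1$ together with the injectivity clause at $r=n$, which is exactly (ii).

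For (i) $\Rightarrow$ (ii), the injectivity at $r=n$ is supplied directly by Corollary \ref{merg} applied to the pseudoconvex $W^1$-Mergelyan domain $D$. The vanishings $H^{n,r}_{\ol D,W^{-1}}(X)=0$ in the intermediate range $2\leq r\leq n-1$ come from dualizing the classical $W^1$ Hodge-theoretic vanishing $H^{0,s}_{W^1}(D)=0$ for $1\leq s\leq n-2$, which holds on pseudoconvex domains with $\cc^{1,1}$ boundary in a Stein manifold by the $\opa$-Neumann theory (cf.\ \cite{ChaHa, Har, Ko} as invoked in the proof of Theorem \ref{Mix}), through the Serre duality between the $(W^1_{p,\bullet}(D),\opa)$ complex and the complex of $W^{-1}$ currents of bidegree $(n-p,n-\bullet)$ with support in $\ol D$ that was already used earlier in the paper.

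Finally, for (iii) $\Rightarrow$ (i), the $W^1$-Mergelyan property will again follow from Corollary \ref{merg} once pseudoconvexity of $D$ is established. To produce pseudoconvexity from the cohomological hypothesis I would first translate (iii) into (ii) as above, and then apply Serre duality in the reverse direction to convert the vanishing of $H^{n,r}_{\ol D, W^{-1}}(X)$ for $2\leq r\leq n-1$ into vanishing (or at least density of $\opa$-image) for $H^{0,s}_{W^1}(D)$ in the degrees $1\leq s\leq n-2$, together with a Hausdorff property at $s=n-1$ extracted from the $q=1$ case of (iii). A characterization of pseudoconvexity by $W^1$ cohomology on $D$, in the spirit of Fu-Laurent-Shaw \cite{FuLaSh} but expressed on $D$ rather than on its complement, then forces $D$ to be pseudoconvex, crucially using that the boundary is $\cc^{1,1}$ and $X\setminus D$ is connected. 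The main obstacle in this step is to ensure that the Hausdorff conditions required for Serre duality to yield genuine vanishing (and not merely density of the coboundaries) are correctly harvested from (iii), and then to run a Hartogs-type argument on the connected complement that promotes the cohomological vanishing on $D$ to Levi pseudoconvexity of its boundary.
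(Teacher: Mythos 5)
Your plan is essentially sound and its overall architecture (Corollary \ref{merg} for the Mergelyan clause, Proposition \ref{qW1complement} for passing between supported cohomology and $^t\opa_{\rm Mix}$, Serre duality plus the $W^1$-cohomological characterization of pseudoconvexity for the rest) matches the paper's. The one place where you genuinely diverge is the equivalence (ii) $\Leftrightarrow$ (iii): you apply Proposition \ref{qW1complement} with $p=0$ degree by degree for $q'\in\{0,\dots,n-2\}$, using that on a Stein manifold $H^{n,s}_c(X)=0$ for $1\leq s\leq n-1$ so that injectivity into the zero group is the same as vanishing of the source; this matches the $n-1$ conditions of (iii) bijectively with the conditions of (ii), and it is correct --- indeed cleaner than the paper, which only invokes Proposition \ref{qW1complement} at the top degree and handles $1\leq q\leq n-2$ by a detour through the $L^2$ cohomology groups $H^{n,q}_{L^2}(X\setminus\ol D)$, Theorem 4.8 of \cite{FuLaSh}, and explicit cut-off arguments comparing $H^{n,q}_{^t\opa_{\rm Mix}}$ with $H^{n,q}_{L^2}$. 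Your route buys brevity; the paper's buys an explicit bridge to the $L^2$ characterization of pseudoconvexity on the complement, which it reuses elsewhere.

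The one substantive point you flag but do not resolve is the Hausdorff bookkeeping in (iii) $\Rightarrow$ (i) (equivalently (ii) $\Rightarrow$ (i)). Serre duality identifies $H^{n,r}_{\ol D,W^{-1}}(X)$ with the dual of $H^{0,n-r}_{W^1}(D)$ only when the relevant groups are Hausdorff, and to conclude $H^{0,1}_{W^1}(D)=0$ (hence pseudoconvexity in all degrees) one needs $H^{n,n}_{\ol D,W^{-1}}(X)$ to be Hausdorff, which is not among the hypotheses of (ii). The paper supplies this with a short but essential lemma: the injectivity of $H^{n,n}_{\ol D,W^{-1}}(X)\to H^{n,n}_c(X)$ itself forces $H^{n,n}_{\ol D,W^{-1}}(X)$ to be Hausdorff, because any $T$ orthogonal to $\oc_{W^1}(D)$ is in particular orthogonal to $\oc(X)$, hence exact with compact support since $H^{n,n}_c(X)$ is Hausdorff, hence exact with support in $\ol D$ by injectivity. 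You should include this argument explicitly; without it the Serre duality step only yields density of the coboundaries rather than vanishing. The remaining ingredients you cite --- $H^{0,n-1}_{W^1}(D)=0$ from Hausdorffness together with the connectedness of $X\setminus D$ (section 3 of \cite{LS}), and the sufficiency of $H^{0,q}_{W^1}(D)=0$ for pseudoconvexity (Theorem 5.1 of \cite{FuLaSh}) --- are exactly the ones the paper uses, so no Hartogs-type argument needs to be constructed from scratch.
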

\begin{proof}
Consider the equivalence between (i) and (ii).
We first notice  that a domain $D$ with $\cc^{1,1}$ boundary is pseudoconvex if and only if $H^{0,q}_{W^1}(D)=0$ for all $1\leq q\leq n-1$.
This follows from \cite{Har} or  Theorem 2.2 in \cite{ChaHa} for the necessary condition and Theorem 5.1 in \cite{FuLaSh} for the sufficient condition.

 Recall that applying Serre duality, we get that $H^{n,n-r+1}_{\ol D,W^{-1}}(X)$ is Hausdorff if and only if $H^{0,r}_{W^1}(D)$ is Hausdorff for each $0\leq r\leq n$ and, when both are Hausdorff, $H^{n,r}_{\ol D,W^{-1}}(X)$ is the dual space of $H^{0,n-r}_{W^1}(D)$.

 Let us prove that (i) implies (ii).
 From the previous remarks we get that if $D$ is  pseudoconvex then $H^{0,q}_{W^1}(D)=0$ for all $1\leq q\leq n-1$ and therefore $H^{n,r}_{\ol D,W^{-1}}(X)=0$ for all $2\leq r\leq n-1$ and $H^{n,n}_{\ol D,W^{-1}}(X)$ is Hausdorff. If moreover $D$ is also $W^1$-Mergelyan in $X$, then the natural map
$H^{n,n}_{\ol D,W^{-1}}(X)\to H^{n,n}_c(X)$ is injective by Corollary \ref{merg}.

Conversely we first prove that the injectivity of the natural map
$H^{n,n}_{\ol D,W^{-1}}(X)\to H^{n,n}_c(X)$  implies that $H^{n,n}_{\ol D,W^{-1}}(X)$ is Hausdorff. Let $T$ be a $W^{-1}$ $(n,n)$-current with support in $\ol D$ such that $<T,\varphi>=0$ for any $W^1$ holomorphic function $\varphi$ on $D$. In particular $<T,\varphi>=0$ for any holomorphic function $\varphi$ on $X$. Since   $X$ is Stein, $H^{n,n}_c(X)$ is Hausdorff and therefore $T=\opa S$ for some $W^{-1}$ $(n,n-1)$-current $S$ with compact support in $X$, i.e. $[T]=0$ in $H^{n,n}_c(X)$. By the injectivity of the map $H^{n,n}_{\ol D,W^{-1}}(X)\to H^{n,n}_c(X)$, we get that $T=\opa U$ for some $W^{-1}$ $(n,n-1)$-current $U$ with support in $\ol D$, which ends the proof.

Now assume (ii) is satisfied. Then $D$ satisfies $H^{n,r}_{\ol D,W^{-1}}(X)=0$ for all $2\leq r\leq n-1$ and $H^{n,n}_{\ol D,W^{-1}}(X)$ is Hausdorff.
Applying Serre duality we get $H^{0,q}_{W^1}(D)=0$ for all $1\leq q\leq n-2$ and $H^{0,n-1}_{W^1}(D)$ is Hausdorff but, as $X$ is Stein and $X\setminus D$ is connected, $H^{0,n-1}_{W^1}(D)=0$ (see section 3 in \cite{LS}).
Therefore $D$ is pseudoconvex by the characterization given at the begining of the proof.
It remains to use Corollary \ref{merg} to get that $D$ is $W^1$-Mergelyan in $X$.

We next prove   the equivalence between (ii) and (iii).
 
Above we proved in particular that if $X$ is Stein and  $X\setminus D$ is connected, then $H^{n,r}_{\ol D,W^{-1}}(X)=0$ for all $2\leq r\leq n-1$ and $H^{n,n}_{\ol D,W^{-1}}(X)$ is Hausdorff if and only if $H^{0,q}_{W^1}(D)=0$ for all $1\leq q\leq n-1$. Recall also that the injectivity of the natural map
$H^{n,n}_{\ol D,W^{-1}}(X)\to H^{n,n}_c(X)$  implies that $H^{n,n}_{\ol D,W^{-1}}(X)$ is Hausdorff.
Therefore assertion (ii) implies $H^{0,q}_{W^1}(D)=0$ for all $1\leq q\leq n-1$ , which is equivalent to ${H}^{n,q}_{L^2}(X\setminus\ol D)=0$ for all $1\leq q\leq n-2$ and ${H}^{n,n-1}_{L^2}(X\setminus\ol D)$ is Hausdorff by Theorem 4.8 in \cite{FuLaSh}.

Since $X$ is Stein, by Proposition \ref{qW1complement}, the injectivity of the natural map
$H^{n,n}_{\ol D,W^{-1}}(X)\to H^{n,n}_c(X)$  implies $H^{n,n-1}_{^t\opa_{\rm Mix}}(X\setminus\ol D)=0$. Therefore to get that (ii) implies (iii),
it remains to prove that for each
$1\leq q\leq n-2$, $H^{n,q}_{L^2}(X\setminus\ol D)=0$ implies $H^{n,q}_{^t\opa_{\rm Mix}}(X\setminus\ol D)=0$.
To get this it is sufficient to prove that the natural map from $H^{n,q}_{^t\opa_{\rm Mix}}(X\setminus\ol D)$ into $H^{n,q}_{L^2}(X\setminus\ol D)$ is injective.
Let $f\in L^2_{n,q}(X\setminus\ol D)$ be a $\opa$-closed form which vanishes outside a compact subset $K$ of $X$. Assume $[f]=0$ in $H^{n,q}_{L^2}(X\setminus\ol D)$, then there exists $g\in L^2_{n,q-1}(X\setminus\ol D)$ such that $f=\opa g$ on $X\setminus\ol D$. Consider a function $\chi$ with compact support in $X$ such that $\chi\equiv 1$ on a neighborhood of $\ol D\cup K$. We set $\wt g= \chi g$. Then $\opa\wt g=\opa\chi\wedge g+\chi\opa g=\opa\chi\wedge g+f$ and the form $\opa\chi\wedge g$ can be extended by $0$ to an $L^2$ $\opa$-closed  $(n,q)$-form with compact support in $X$. Since $X$ is Stein, there is an $h\in (L^2_{loc})^{n,q-1}(X)$ with compact support such that $\opa h=\opa\chi\wedge g$ on $X$ and it follows that $\opa\wt g=\opa h+f$ on $X\setminus\ol D$. Then $u=\wt g-h$ vanishes outside a compact subset of $X$ and $\opa u=f$, which ends the proof of the injectivity.

Now assume (iii) holds, i.e. ${H}^{n,q}_{^t\opa_{\rm Mix}}(X\setminus\ol D)=0$, for all $1\leq q\leq n-1$.
We first prove that, for each
$1\leq q\leq n-2$, $H^{n,q}_{^t\opa_{\rm Mix}}(X\setminus\ol D)=0$ implies $H^{n,q}_{L^2}(X\setminus\ol D)=0$ and that $H^{n,n-1}_{^t\opa_{\rm Mix}}(X\setminus\ol D)=0$, implies $H^{n,n-1}_{L^2}(X\setminus\ol D)$ is Hausdorff.

Since $X$ is a Stein manifold, there exists a relatively compact strictly pseudoconvex domain $U$ in $X$ with $\cc^2$ boundary such that $D\subset\subset U$. As already noticed previously, the properties of $U$ imply that ${H}^{n,q}_{L^2}(X\setminus\ol U)=0$ for all $1\leq q\leq n-2$ and ${H}^{n,n-1}_{L^2}(X\setminus\ol U)$ is Hausdorff.

Let $1\leq q\leq n-2$ and $f\in L^2_{n,q}(X\setminus\ol D)$ a $\opa$-closed form, then there exists $g\in L^2_{n,q-1}(X\setminus\ol U)$ such that $f=\opa g$ on $X\setminus\ol U$. Let $V\subset\subset X$ be a neighborhood of $\ol U$ and $\chi$ a smooth function equal to $1$ on $X\setminus V$ and with support contained in $X\setminus\ol U$.
Therefore the form $f-\opa(\chi g)=(1-\chi)f-\opa\chi\wedge g$ vanishes outside the compact subset $\ol V$ and belongs to the domain of $^t\opa_{\rm Mix}$. So if $H^{n,q}_{^t\opa_{\rm Mix}}(X\setminus\ol D)=0$, then $f-\opa(\chi g)=\opa u$ for some $u\in L^2_{n,q-1}(X\setminus\ol D)$ and $f=\opa(\chi g+u)$, which means $H^{n,q}_{L^2}(X\setminus\ol D)=0$.

Let $f\in L^2_{n,n-1}(X\setminus\ol D)$ a $\opa$-closed form such that $\int_X f\wedge\varphi=0$ for any $\opa$-closed $L^2$ $(0,1)$-form $\varphi$ on $X$ which vanishes on the closure of $D$ and outside a compact subset of $X$ and in particular for any $\opa$-closed $L^2$ $(0,1)$-form $\varphi$ on $X$ which vanishes on the closure of $U$ and outside a compact subset of $X$. Since ${H}^{n,n-1}_{L^2}(X\setminus\ol U)$ is Hausdorff, there exists $g\in L^2_{n,q-1}(X\setminus\ol U)$ such that $f=\opa g$ on $X\setminus\ol U$. Then we can repeat the end of the proof of the previous assertion.

Therefore (iii) implies $H^{0,q}_{W^1}(\ol D)=0$ for all $1\leq q\leq n-1$ (see Theorem 4.8 in \cite{FuLaSh}) and we get $H^{n,r}_{\ol D,W^{-1}}(X)=0$ for all $2\leq r\leq n-1$ by Serre duality.
Finally using Proposition \ref{qW1complement}, we obtain that the natural map
$H^{n,n}_{\ol D,W^{-1}}(X)\to H^{n,n}_c(X)$ is injective, which ends the proof.
 
\end{proof}

From Corollary \ref{car}, the vanishing of the cohomology groups ${H}^{n,q}_{^t\opa_{\rm Mix}}(X\setminus\ol D)$    characterizes pseudoconvexity  and $W^1$-Mergelyan  property of $D$.

\providecommand{\bysame}{\leavevmode\hbox to3em{\hrulefill}\thinspace}
\providecommand{\MR}{\relax\ifhmode\unskip\space\fi MR }
\providecommand{\MRhref}[2]{%
  \href{http://www.ams.org/mathscinet-getitem?mr=#1}{#2}
}
\providecommand{\href}[2]{#2}

\enddocument

\end